\newtheorem{theorem}{Theorem}[section]
\newtheorem{lemma}[theorem]{Lemma}
\newtheorem{proposition}[theorem]{Proposition}
\newtheorem{corollary}[theorem]{Corollary}
\theoremstyle{definition}
\newtheorem{definition}[theorem]{Definition}
\newtheorem{example}[theorem]{Example}
\theoremstyle{remark}
\numberwithin{equation}{section}
\begin{document}
	
	\setcounter{page}{1}
	
	\title{Exploring $K$-Riesz and $K$-$g$-Riesz Bases in the Context of Hilbert Pro-$C^*$-Modules }
	
	\author{Roumaissae Eljazzar$^{1*}$ \MakeLowercase{and} Mohamed Rossafi$^2$}
	
	\address{$^{1}$Department of Mathematics, Faculty Of Sciences, University of Ibn Tofail, Kenitra, Morocco}
	\email{ro.eljazzar@gmail.com ;roumaissae.eljazzar@uit.ac.ma}
	
	\address{$^{2}$Department of Mathematics, Faculty of Sciences Dhar El Mahraz, University Sidi Mohamed Ben Abdellah, Fes, Morocco}
	\email{mohamed.rossafi@usmba.ac.ma; mohamed.rossafi@uit.ac.ma}

	\subjclass[2010]{Primary 42C15; Secondary 46L05.}
	
	\keywords{$K$-Riesz Bases, $K$-$g$-Riesz Bases, Pro-$C^{\ast}$-algebra, Hilbert pro-$C^{\ast}$-modules.}
	
	\date{$^{*}$Corresponding author}
	
	\setcounter{page}{1}
	
	\title[Exploring $K$-Riesz and $K$-$g$-Riesz Bases in the Context of Hilbert Pro-$C^*$-Modules ]{Exploring $K$-Riesz and $K$-$g$-Riesz Bases in the Context of Hilbert Pro-$C^*$-Modules}
	
	\maketitle
	
\begin{abstract}
	In the present research, we embark on a comprehensive inquiry into \(K\)-Riesz bases and \(K\)-g Riesz bases as they manifest within pro-\(C^*\)-Hilbert modules. Adopting a unique approach, we interpret the structure of \(K\)-Riesz bases through the lens of a bounded below operator, complemented by the prototypical orthonormal basis of the Hilbert module \(\mathcal{X}\). In parallel, our investigation elucidates nuanced traits and pioneering perspectives of \(K\)-g-Riesz bases, steered by a bounded surjective operator and a corresponding \(g\)-orthonormal basis for \(\mathcal{X}\). As a culmination of our efforts, we present an enriched understanding of the dynamic interactions between \(K\)-Riesz bases and \(K\)-g-Riesz bases in this mathematical context.
\end{abstract}

\section{Introduction}

The realm of frame theory in Hilbert spaces traces its roots back to the groundbreaking work of Duffin and Schaefer in 1952, initiated as an endeavor to comprehend non-harmonic Fourier series \cite{Duf}. Over the years, this discipline has blossomed, gathering momentum and expanding its scope. The plethora of applications encompassing mathematical domains such as operator theory and harmonic analysis, along with engineering fields ranging from wireless communication to image processing, underlines the immense potential and versatility of frame theory.

Modern technological advances underscore the practical implications of deepening our understanding in this domain. As data processing becomes increasingly sophisticated, the tools and frameworks we employ must similarly evolve. The richness of frame theory, with its diverse applications, positions it as a valuable asset in an era of complex computation and analysis. Its connection to wireless communication and image processing, for instance, highlights how mathematical abstractions find tangible, real-world implementations that significantly impact our daily lives.

However, as with any matured area of research, the quest for deeper insights and generalizations is a natural progression. In recent mathematical literature, the study of g-frames and g-Riesz bases within Hilbert spaces has gained significant attention. Sun's seminal work \cite{Sun} not only introduced these concepts but also delved into an exploration of their intrinsic properties.

The pursuit of generalized constructs, such as the $K$-Riesz bases, speaks to the broader goals of mathematical research: to encapsulate complex phenomena within more comprehensive and versatile frameworks. By doing so, we not only gain a richer understanding of the subject at hand but also pave the way for interdisciplinary applications and insights.

Taking inspiration from the pioneering work of Sheraki and Abdollahpour \cite{Shekari}, who introduced the notions of $K$-Riesz bases and $K$-g-Riesz bases in standard Hilbert spaces, our paper ventures into new territory. Specifically, we present the idea of $K$-Riesz basis and $K$-g-Riesz basis in Hilbert pro-$C^*$-module, positioning them as a natural generalization of the former constructs.

Our paper is structured for clarity and progressive exploration. In Section 2, we provide a concise overview of the foundational definitions and properties related to Hilbert pro-$C^*$-modules, frames, $K$-frames, g-frames, $K$-g-frames, Riesz bases, and g-Riesz bases as they stand in Hilbert pro-$C^*$-modules. Moving forward to Section 3, our focus narrows down to the construction and characterization of the $K$-Riesz basis within the Hilbert pro-$C^*$-module setting. Finally, in Section 4, we delve deep into the study of $K$-g-Riesz bases, illuminating their unique properties and significance in the Hilbert pro-$C^*$-module context.

\section{Preliminaries}
\begin{definition}\cite{Philips}
	Given a locally $C^{\ast}$-algebra, denoted as $\mathcal{A}$, a pre-Hilbert module over this algebra is a complex vector space, termed $\mathcal{X}$. This space also acts as a left $\mathcal{A}$-module in harmony with the complex algebra structure. It's equipped with an inner product, $\langle ., .\rangle: \mathcal{X} \times \mathcal{X} \rightarrow \mathcal{A}$, which is linear with respect to both $\mathbb{C}$ and $\mathcal{A}$ in the first argument and satisfies:
	
	\begin{enumerate}
		\item 
		For any vectors $\xi, \eta \in \mathcal{X}$, it is true that $\langle \xi, \eta\rangle^{*} = \langle \eta, \xi\rangle$.
		
		\item 
		It's always the case that $\langle \xi, \xi\rangle \geq 0$ for every vector $\xi \in \mathcal{X}$.
		
		\item 
		A vector $\xi$ in $\mathcal{X}$ fulfills $\langle \xi, \xi\rangle = 0$ if and only if that vector $\xi = 0$.
	\end{enumerate}
	
	Furthermore, if $\mathcal{X}$ is complete in terms of the topology induced by the set of seminorms:
	$$
	\bar{p}_{\mathcal{X}}(\xi) = \sqrt{p_{\alpha}(\langle \xi, \xi\rangle)}, \quad \xi \in \mathcal{X}, \text{ and } p \in S(\mathcal{A}),
	$$
	then we refer to $\mathcal{X}$ as either a Hilbert $\mathcal{A}$-module or, alternatively, a Hilbert pro-$C^{\ast}$-module over $\mathcal{A}$.
\end{definition}

\begin{definition}
	A mapping between $\mathcal{A}$-modules, denoted as  $T: \mathcal{X} \to \mathcal{Y}$, is termed adjointable if an accompanying map $T^{\ast}: \mathcal{Y} \to \mathcal{X}$ exists, fulfilling the relation: $\langle T\xi, \eta \rangle = \langle \xi, T^{\ast}\eta \rangle$ for each $\xi \in \mathcal{X}$ and $\eta \in \mathcal{Y}$. Such a map is designated as bounded when, for every $p$ in $S(\mathcal{A})$, a positive constant $M_{p}$ can be found ensuring that $\bar{p}_{\mathcal{Y}}(T\xi) \leq M_{p} \bar{p}_{\mathcal{X}}(\xi)$ for all vectors $\xi$ in $\mathcal{X}$.
	\\
	The collection of all adjointable operators transitioning from $\mathcal{X}$ to $\mathcal{Y}$ is represented as $Hom_{\mathcal{A}}^{\ast}(\mathcal{X}, \mathcal{Y})$. When referring to operators that map from $\mathcal{X}$ to itself, we use the notation $Hom_{\mathcal{A}}^{\ast}(\mathcal{X})=Hom_{\mathcal{A}}^{\ast}(\mathcal{X}, \mathcal{X})$.
\end{definition}

In this study, we regard \(\mathcal{X}\) to be a Hilbert \(\mathcal{A}\)-module. Let's take a set \(\left\{\mathcal{X}_i\right\}_{i \in I}\) of Hilbert \(\mathcal{A}\)-modules, where \(I\) can be a finite or countably infinite index set. We introduce \(l^2\left(\left\{\mathcal{X}_i\right\}_{i \in I}\right)\) as its associated Hilbert \(\mathcal{A}\)-module.

\[
l^2\left(\left\{\mathcal{X}_i\right\}_{i \in I}\right) = \left\{ \left\{x_i\right\}_{i \in I}: x_i \in \mathcal{X}_i \, \forall i \in I, \text{ and } \sum_{i \in I} \sqrt{p_\alpha \left\langle x_i, x_i \right\rangle_{\mathcal{A}}} \text{ is convergent } \right\},
\]
where the \(\mathcal{A}\)-inner product is given by \(\left\langle \left\{x_i\right\}_{i \in I}, \left\{y_i\right\}_{i \in I} \right\rangle = \sum_{i \in I} \left\langle x_i, y_i \right\rangle_{\mathcal{A}}\), applicable for all \(\left\{x_i\right\}_{i \in I}, \left\{y_i\right\}_{i \in I} \in l^2\left(\left\{\mathcal{X}_i\right\}_{i \in I}\right)\).

Furthermore, consider \(l^2(\mathcal{A})\) as the Hilbert \(\mathcal{A}\)-module described by

\begin{equation}
l^2(\mathcal{A}) = \left\{ \left\{\alpha_i\right\}_{i \in I} \subset \mathcal{A}: \text{ the series } \sum_{i \in I} \sqrt{p(\alpha_i \alpha_i^*)} \text{ converges for each semi-norm } p \in S(\mathcal{A}) \right\},
\end{equation}

and its \(\mathcal{A}\)-inner product is \(\left\langle \left\{\alpha_i\right\}_{i \in I}, \left\{\beta_i\right\}_{i \in I} \right\rangle = \sum_{i \in I} \alpha_i \beta_i^*\), for all \(\left\{\alpha_i\right\}_{i \in I}, \left\{\beta_i\right\}_{i \in I} \in l^2(\mathcal{A})\).

\begin{proposition}\cite{Alizadeh}\label{prop2.2}
	Suppose \(\mathcal{A}\) is a pro-\(C^*\)-algebra, and \(\mathcal{X}\) and \(\mathcal{Y}\) are Hilbert \(\mathcal{A}\)-modules. Let \(T\) be a bounded operator in \(Hom_{\mathcal{A}}^*(\mathcal{X}, \mathcal{Y})\). Then the subsequent conditions are equivalently true:
	\begin{enumerate}
		\item[(i)]   \(T\) is onto;
		\item[(ii)]  The adjoint \(T^*\) has a lower bound in \(Hom_{\mathcal{A}}^*(\mathcal{X}, \mathcal{Y})\), meaning there exists a positive constant \(m\) such that for all \(p\) in \(S(\mathcal{A})\) and for every \(\xi \in \mathcal{Y}\), we have \(m \bar{p}_{\mathcal{Y}}(\xi) \leq \bar{p}_{\mathcal{X}}\left(T^* \xi\right)\);
		\item[(iii)] The adjoint \(T^*\) possesses a lower bound with respect to the inner product, in other words, there is a positive value \(m^{\prime}\) for which \(m^{\prime}\langle \xi, \xi\rangle \leq\left\langle T^* \xi, T^* \xi\right\rangle\) holds for every \(\xi \in \mathcal{Y}\).
	\end{enumerate}
\end{proposition}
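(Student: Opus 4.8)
The plan is to prove the chain of equivalences $(i) \Rightarrow (iii) \Rightarrow (ii) \Rightarrow (i)$, which keeps each implication elementary while only requiring one genuinely substantive step. The operator $TT^* \in Hom_{\mathcal{A}}^*(\mathcal{Y})$ will be the central object: it is positive and self-adjoint, and its invertibility is exactly what links surjectivity of $T$ to a lower bound for $T^*$.

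\medskip

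\emph{Step 1: $(i) \Rightarrow (iii)$.} Assuming $T$ is onto, I would first argue that $TT^*$ is invertible in $Hom_{\mathcal{A}}^*(\mathcal{Y})$. The point is that $\mathrm{Ran}(TT^*)$ is dense in $\mathcal{Y}$ (since $\mathrm{Ran}(T) = \mathcal{Y}$ and $\mathrm{Ran}(TT^*)^{\perp} = \ker(TT^*) = \ker(T^*)$, and one shows $\ker(T^*) = \mathrm{Ran}(T)^{\perp} = \{0\}$), and that a surjective adjointable operator has closed range; combining these, $TT^*$ is a positive invertible operator. Then for $S := (TT^*)^{-1}$ we have $0 < S$, so by the functional calculus in the pro-$C^*$-algebra $Hom_{\mathcal{A}}^*(\mathcal{Y})$ there is a constant $c>0$ with $c \cdot \mathrm{id}_{\mathcal{Y}} \leq TT^*$. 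Pairing this inequality against $\xi$ gives $c\langle \xi,\xi\rangle \leq \langle TT^*\xi,\xi\rangle = \langle T^*\xi, T^*\xi\rangle$, which is $(iii)$ with $m' = c$.

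\medskip

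\emph{Step 2: $(iii) \Rightarrow (ii)$.} This is the routine passage from an inner-product estimate to a seminorm estimate. Applying any $p \in S(\mathcal{A})$ to the inequality $m'\langle \xi,\xi\rangle \leq \langle T^*\xi, T^*\xi\rangle$ and using that each $p_\alpha$ is a submultiplicative seminorm monotone on positive elements, I get $m' \, p_\alpha(\langle \xi,\xi\rangle) \leq p_\alpha(\langle T^*\xi, T^*\xi\rangle)$, and taking square roots yields $\sqrt{m'}\,\bar{p}_{\mathcal{Y}}(\xi) \leq \bar{p}_{\mathcal{X}}(T^*\xi)$, i.e.\ $(ii)$ with $m = \sqrt{m'}$.

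\medskip

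\emph{Step 3: $(ii) \Rightarrow (i)$.} Here I would show that a lower seminorm bound on $T^*$ forces $\mathrm{Ran}(T)$ to be both closed and dense, hence all of $\mathcal{Y}$. Density again comes from $\mathrm{Ran}(T)^{\perp} = \ker(T^*) = \{0\}$, the last equality being immediate from the bound in $(ii)$. For closedness, I would transfer the seminorm bound into the language that lets one conclude $T$ has closed range in the Hilbert pro-$C^*$-module setting --- the standard argument being that $T^*$ bounded below implies $TT^*$ bounded below (as an operator on $\mathcal{Y}$), and an adjointable positive operator bounded below is invertible, whence $\mathrm{Ran}(TT^*) = \mathcal{Y} \subseteq \mathrm{Ran}(T)$ gives surjectivity of $T$ directly. \textbf{The main obstacle} is precisely this closed-range/invertibility step: in a pro-$C^*$-algebra one must be careful, since not every element bounded below need be invertible in the naive $C^*$-sense, and one has to invoke the correct structural fact (e.g.\ that $Hom_{\mathcal{A}}^*(\mathcal{Y})$ is itself a pro-$C^*$-algebra and that a positive element which is bounded below by a positive scalar multiple of the identity is invertible there, or alternatively an inverse-limit argument over the $C^*$-algebras $\mathcal{A}/\ker p_\alpha$). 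Once that is in hand, the three implications close the loop and the proposition follows.
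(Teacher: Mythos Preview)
The paper does not prove this proposition; it is quoted from \cite{Alizadeh} as a preliminary, so there is no in-paper argument to compare your proposal against.

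Your cycle $(i)\Rightarrow(iii)\Rightarrow(ii)\Rightarrow(i)$ is the standard shape and Step~2 is routine, but there is a genuine gap in Step~1: from $\mathrm{Ran}(TT^*)^\perp=\{0\}$ you cannot conclude density in a Hilbert (pro-)$C^*$-module (orthocomplementation fails in general), and you have not shown that $TT^*$ itself has closed range, so ``combining these'' does not yet yield invertibility of $TT^*$. The missing ingredient---both here and in Step~3---is the closed-range theorem for adjointable operators (Lance, \emph{Hilbert $C^*$-Modules}, Thm.~3.2, carried to the pro-$C^*$ setting via exactly the inverse-limit device you mention): if $T$ has closed range then so does $T^*$, with orthogonal splittings $\mathcal{X}=\ker T\oplus\mathrm{Ran}(T^*)$ and $\mathcal{Y}=\ker T^*\oplus\mathrm{Ran}(T)$. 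Granting this, Step~1 becomes: $T$ onto $\Rightarrow$ $T|_{\mathrm{Ran}(T^*)}$ is a bijection onto $\mathcal{Y}$ $\Rightarrow$ $\mathrm{Ran}(TT^*)=\mathcal{Y}$, so $TT^*$ is invertible; and Step~3 becomes: $(ii)$ gives $T^*$ injective with closed range $\Rightarrow$ $\mathrm{Ran}(T)$ closed with complement $\ker T^*=\{0\}$ $\Rightarrow$ $T$ onto. Note also that your proposed shortcut in Step~3, ``seminorm lower bound on $T^*$ $\Rightarrow$ $TT^*\geq cI$'', is not valid directly: the inequality $p(\langle T^*\xi,T^*\xi\rangle)\geq m^2 p(\langle\xi,\xi\rangle)$ for all $p$ does not by itself force the $\mathcal{A}$-valued inequality $\langle T^*\xi,T^*\xi\rangle\geq m^2\langle\xi,\xi\rangle$, so the closed-range machinery is genuinely needed there too.
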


\begin{proposition}\cite{Azhini}. \label{Prop2.4}
	Consider a Hilbert module \(\mathcal{X}\) over the pro-\(C^{*}\)-algebra \(\mathcal{A}\). Assume that \(T\) is a inversible element in \(Hom_{\mathcal{A}}^{\ast}(\mathcal{X})\) and both maintain boundedness uniformly. For any given \(\xi \in \mathcal{X}\), the following inequalities hold:
	\[
	\frac{\langle \xi, \xi\rangle}{\|T^{-1}\|_{\infty}^{2}} \leq \langle T \xi, T \xi \rangle \leq \|T\|_{\infty}^{2}\langle \xi, \xi\rangle
	\].
\end{proposition}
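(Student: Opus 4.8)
The plan is to establish the right-hand (upper) inequality first and then deduce the left-hand (lower) inequality by applying the upper bound to $T^{-1}$. The starting observation is that, since $T$ is adjointable and uniformly bounded, so is $T^{*}$, and therefore $T^{*}T \in Hom_{\mathcal{A}}^{\ast}(\mathcal{X})$ is a positive, self-adjoint, uniformly bounded operator. I would first record the identity $\langle T\xi, T\xi\rangle = \langle T^{*}T\xi, \xi\rangle$, valid for every $\xi \in \mathcal{X}$ by the defining property of the adjoint, so that the whole problem reduces to bounding $\langle T^{*}T\xi, \xi\rangle$ above by $\|T\|_{\infty}^{2}\langle \xi, \xi\rangle$.

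The key structural fact I would invoke is that the uniformly bounded adjointable operators on $\mathcal{X}$ form a $C^{\ast}$-algebra under the uniform operator norm $\|\cdot\|_{\infty}$, so that the $C^{\ast}$-identity $\|T^{*}T\|_{\infty} = \|T\|_{\infty}^{2}$ holds and $\|T\|_{\infty}^{2} I - T^{*}T$ is a positive element of that $C^{\ast}$-algebra (its spectrum lies in $[0,\|T\|_{\infty}^{2}]$). Hence it admits a factorization $\|T\|_{\infty}^{2} I - T^{*}T = R^{*}R$ for some $R \in Hom_{\mathcal{A}}^{\ast}(\mathcal{X})$, and then for any $\xi \in \mathcal{X}$,
$$\|T\|_{\infty}^{2}\langle \xi, \xi\rangle - \langle T\xi, T\xi\rangle = \langle (\|T\|_{\infty}^{2} I - T^{*}T)\xi, \xi\rangle = \langle R\xi, R\xi\rangle \geq 0,$$
which is exactly the upper inequality.

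For the lower inequality I would apply the inequality just obtained to the operator $T^{-1}$ — itself invertible, adjointable, and uniformly bounded by hypothesis — evaluated at the vector $T\xi$ in place of $\xi$:
$$\langle T^{-1}(T\xi), T^{-1}(T\xi)\rangle \leq \|T^{-1}\|_{\infty}^{2}\langle T\xi, T\xi\rangle,$$
that is, $\langle \xi, \xi\rangle \leq \|T^{-1}\|_{\infty}^{2}\langle T\xi, T\xi\rangle$, which rearranges to $\dfrac{\langle \xi, \xi\rangle}{\|T^{-1}\|_{\infty}^{2}} \leq \langle T\xi, T\xi\rangle$.

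I expect the main obstacle to be the passage from an operator inequality in the $C^{\ast}$-sense to the module-valued inner-product inequality in the pro-$C^{\ast}$ setting: one must justify that the factorization $\|T\|_{\infty}^{2} I - T^{*}T = R^{*}R$ (equivalently, that $S \geq 0$ implies $\langle S\xi, \xi\rangle \geq 0$ in $\mathcal{A}$) is legitimate for uniformly bounded adjointable operators on a Hilbert pro-$C^{\ast}$-module, which rests on identifying these operators with a genuine $C^{\ast}$-algebra through the uniform norm and on the continuity of the $\mathcal{A}$-valued inner product with respect to the seminorm topology. Once that point is secured, everything else is routine manipulation with the adjoint relation.
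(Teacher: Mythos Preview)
The paper does not supply its own proof of this proposition; it is quoted from \cite{Azhini} as a preliminary fact. So there is no in-paper argument to compare against.

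Your approach is the standard one and is correct. The identity $\langle T\xi,T\xi\rangle=\langle T^{*}T\xi,\xi\rangle$, the positivity of $\|T\|_{\infty}^{2}I-T^{*}T$ via the $C^{*}$-structure on uniformly bounded adjointable operators, and the substitution $T^{-1}$, $T\xi$ for the lower bound are exactly how this is proved in the Hilbert $C^{*}$-module case, and the argument transfers to the pro-$C^{*}$ setting because the uniformly bounded adjointable operators on $\mathcal{X}$ do form a $C^{*}$-algebra under $\|\cdot\|_{\infty}$ (this is the content of results in \cite{Joita,Azhini}). Your own caveat about the ``main obstacle'' is thus already answered by the factorization $\|T\|_{\infty}^{2}I-T^{*}T=R^{*}R$ you wrote down: once $R$ is adjointable, $\langle R\xi,R\xi\rangle\geq 0$ in $\mathcal{A}$ is immediate from the inner-product axioms, with no further topological subtlety. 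Nothing is missing.
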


\begin{definition}\cite{Joita}
	Consider a sequence $\left\{\xi_{i}\right\}_{i}$ within $M(\mathcal{X})$. This sequence is deemed a canonical frame of multipliers for $\mathcal{X}$ if, for any $\xi \in \mathcal{X}$, the series $\sum_{i}\left\langle\xi, \xi_{i}\right\rangle_{M(\mathcal{X})}\left\langle \xi_{i}, \xi\right\rangle_{M(\mathcal{X})}$ attains convergence in $\mathcal{A}$. Moreover, there exist two positive constants, namely $C$ and $D$, such that
	\[
	C\langle\xi, \xi\rangle_{\mathcal{X}} \leq \sum_{i}\left\langle\xi, \xi_{i}\right\rangle_{M(\mathcal{X})}\left\langle \xi_{i}, \xi\right\rangle_{M(\mathcal{X})} \leq D\langle\xi, \xi\rangle_{\mathcal{X}}
	\]
	for every $\xi \in \mathcal{X}$. When both $D$ and $C$ equal 1, the sequence $\left\{\xi_{i} \right\}_{i}$ is recognized as a normalized canonical frame of multipliers.
	
	Specifically, if the following inequality holds,
	\[
	\sum_{i}\left\langle\xi, \xi_{i}\right\rangle_{M(\mathcal{X})}\left\langle \xi_{i}, \xi\right\rangle_{M(\mathcal{X})} \leq D\langle\xi, \xi\rangle_{\mathcal{X}} \; \; \forall \xi \in \mathcal{X}
	\]
	then we term the sequence $\left\{\xi_{i}\right\}_{i\in I}$ a Bessel sequence.
\end{definition}
Let's first establish a foundational understanding of the Riesz basis concept.

\begin{definition}
	Consider a Hilbert space \(\mathcal{X}\) within a pro-$C^*$-module. A frame \(\left\{\xi_i\right\}_{i \in I}\) is termed a Riesz basis for \(\mathcal{X}\) if it meets the following criteria:
	\begin{enumerate}
		\item \(\xi_i \neq 0\) for all \(i \in I\).
		\item For any \(\mathcal{A}\)-linear combination 
		\[\sum_{j \in S} \alpha_j \xi_j\]
		with the coefficients \(\left\{\alpha_j: j \in S\right\} \subseteq \mathcal{A}\) and a subset \(S \subseteq I\) that results in zero, each term \(\alpha_j \xi_j\) must also be zero.
	\end{enumerate}
\end{definition}



\begin{theorem}
	Let $\left\{\xi_j\right\}_{j \in J}$ be a frame of a finitely or countably generated Hilbert $\mathcal{A}$-module $\mathcal{X}$ over a unital pro-$C^*$-algebra $\mathcal{A}$. Then $\left\{\xi_j\right\}_{j \in J}$ is a Riesz basis if and only if $\xi_n \neq 0$ and $Q_n\left(Ran\left(T\right)\right) \subseteq Ran\left(T\right)$ for all $n \in J$, where $T$ is the analysis operator of $\left\{\xi_j\right\}_{j \in J}$.
\end{theorem}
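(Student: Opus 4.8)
The plan is to recast both sides of the claimed equivalence as statements about the synthesis operator $T^{\ast}$, and to bridge them through the orthogonal projection of $l^{2}(\mathcal{A})$ onto $Ran(T)$. Fix the standard orthonormal basis $\{e_j\}_{j\in J}$ of $l^{2}(\mathcal{A})$ (which exists because $\mathcal{A}$ is unital), and write $Q_n$ for the coordinate projection onto $\mathcal{A}e_n$, so that $Q_n\{\alpha_j\}_{j}=\alpha_n e_n$. I would first record the two elementary identities $T^{\ast}e_j=\xi_j$ and, by $\mathcal{A}$-linearity of $T^{\ast}$, $T^{\ast}(\alpha e_n)=\alpha\xi_n$ for every $\alpha\in\mathcal{A}$. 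Next, since $\{\xi_j\}_{j\in J}$ is a frame, its frame operator $S=T^{\ast}T$ is invertible in $Hom_{\mathcal{A}}^{\ast}(\mathcal{X})$; hence $P:=TS^{-1}T^{\ast}$ is a self-adjoint idempotent in $Hom_{\mathcal{A}}^{\ast}\bigl(l^{2}(\mathcal{A})\bigr)$ with $Ran(P)=Ran(T)$. In particular $Ran(T)$ is orthogonally complemented, $l^{2}(\mathcal{A})=Ran(T)\oplus\ker(T^{\ast})$, and $\ker(T^{\ast})=Ran(I-P)=Ran(T)^{\perp}$.

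The key lemma I would isolate is the chain of equivalences
\[
Q_n\bigl(Ran(T)\bigr)\subseteq Ran(T)\ \ \forall n
\ \Longleftrightarrow\ Q_nP=PQ_n\ \ \forall n
\ \Longleftrightarrow\ Q_n\bigl(\ker(T^{\ast})\bigr)\subseteq\ker(T^{\ast})\ \ \forall n.
\]
This is proved by the standard projection argument: $Q_n\bigl(Ran(P)\bigr)\subseteq Ran(P)$ gives $PQ_nP=Q_nP$, and taking adjoints gives $PQ_nP=PQ_n$, whence $Q_n$ commutes with $P$ and therefore with $I-P$; the reverse implications are immediate. Thus the theorem reduces to showing that the $\mathcal{A}$-independence clause in the definition of a Riesz basis is equivalent to $\ker(T^{\ast})$ being invariant under every $Q_n$.

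That last equivalence is then a direct dictionary translation. A vanishing $\mathcal{A}$-linear combination $\sum_{j\in S}\alpha_j\xi_j=0$ is precisely the statement $T^{\ast}v=0$ for $v=\{\alpha_j\}_{j\in J}\in l^{2}(\mathcal{A})$ supported on $S$, i.e.\ $v\in\ker(T^{\ast})$; and ``each term $\alpha_j\xi_j=0$'' says exactly that $T^{\ast}(Q_n v)=\alpha_n\xi_n=0$ for all $n$, i.e.\ $Q_n v\in\ker(T^{\ast})$ for all $n$. Consequently, if $\{\xi_j\}_{j\in J}$ is a Riesz basis then every $v\in\ker(T^{\ast})$ has all $Q_n v\in\ker(T^{\ast})$, so $\ker(T^{\ast})$ is $Q_n$-invariant and, by the lemma, $Q_n\bigl(Ran(T)\bigr)\subseteq Ran(T)$; conversely, if the range condition holds then $\ker(T^{\ast})$ is $Q_n$-invariant, so $T^{\ast}v=0$ forces $\alpha_n\xi_n=0$ for all $n$, which together with the standing hypothesis $\xi_n\neq0$ is exactly the Riesz-basis property. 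The clause $\xi_n\neq 0$ occurs verbatim on both sides, so it needs no separate argument.

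The main obstacle is the Hilbert pro-$C^{\ast}$-module bookkeeping underlying the first paragraph rather than any algebraic subtlety: one must confirm that $S=T^{\ast}T$ is genuinely invertible (so that $P=TS^{-1}T^{\ast}$ is a legitimate adjointable projection), that $\ker(T^{\ast})=Ran(T)^{\perp}$ and the decomposition $l^{2}(\mathcal{A})=Ran(T)\oplus\ker(T^{\ast})$ hold in the topology induced by the seminorms, and that the synthesis series $T^{\ast}v=\sum_j\alpha_j\xi_j$ converges for each $v\in l^{2}(\mathcal{A})$ so that the ``vanishing combination $\leftrightarrow$ element of $\ker(T^{\ast})$'' correspondence is exact. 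All of this is available from the frame hypothesis and the preliminaries (in particular Propositions \ref{prop2.2} and \ref{Prop2.4}); once it is in place, the three-fold equivalence and its matching with the definition are routine.
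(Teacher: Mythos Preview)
Your argument is correct and rests on the same underlying identification $\ker(T^{*})=Ran(T)^{\perp}$ that the paper uses, but you package the passage between $Ran(T)$ and $\ker(T^{*})$ differently. The paper argues each direction by hand: in the forward direction it takes $\alpha\in Ran(T)^{\perp}$, deduces $\sum_j\alpha_j\xi_j=0$, applies the Riesz-basis independence to get $\alpha_j\xi_j=0$, and checks directly that $\alpha\perp Q_n(Ran(T))$; in the converse it fixes $n$ and, for each $\xi$, produces an element $\eta_n$ with $T\eta_n=Q_nT\xi$, then computes $\langle\xi,\alpha_n\xi_n\rangle=\langle\eta_n,\sum_j\alpha_j\xi_j\rangle=0$. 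You instead build the orthogonal projection $P=TS^{-1}T^{*}$ onto $Ran(T)$ and prove once and for all the commutation equivalence $Q_n(Ran(T))\subseteq Ran(T)\Leftrightarrow Q_nP=PQ_n\Leftrightarrow Q_n(\ker T^{*})\subseteq\ker T^{*}$, after which both directions are a one-line dictionary translation between ``$v\in\ker T^{*}$'' and ``$\sum_j\alpha_j\xi_j=0$''. Your route is cleaner and makes the symmetry of the two directions transparent, at the cost of invoking invertibility of $S$ and the existence of $P$ up front; the paper's route avoids naming $P$ explicitly but pays for it with the somewhat ad hoc preimage $\eta_n$ in the converse direction. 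Substantively the two proofs are equivalent reformulations of the same idea.
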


\begin{proof}
	Suppose first that $\left\{\xi_j\right\}_{j \in J}$ is a Riesz basis. Note that for any $\alpha=\left\{\alpha_j\right\}_{j \in J}$ in $l^2(\mathcal{A})$, if $\sum_{j \in J} \alpha_j \xi_j=0$, then $\alpha_j \xi_j=0$ for all $j \in J$. Now for any $\alpha=\left\{\alpha_j\right\}_{j \in J} \in Ran\left(T\right)^{\perp}$ and $\xi \in \mathcal{X}$, we have
	\[
	0=\sum_{j \in J}\left\langle \xi, \xi_j\right\rangle \alpha_j^*=\left\langle \xi, \sum_{j \in J} \alpha_j \xi_j\right\rangle .
	\]
	And so $\alpha_j \xi_j=0$ holds for all $j$. This implies that
	\[
	\sum_{j=1}^n\left\langle \xi, \xi_j\right\rangle \alpha_j^*=\sum_{j=1}^n\left\langle \xi, \alpha_j \xi_j\right\rangle=0 .
	\]
	It follows that $\alpha \in Q_n\left(Ran\left(T\right)\right)^{\perp}$, and so $Ran\left(T\right)^{\perp} \subseteq Q_n\left(Ran\left(T\right)\right)^{\perp}$. Consequently $Q_n\left(Ran\left(T\right)\right) \subseteq Ran\left(T\right)$. Suppose now that $Q_n\left(Ran\left(T\right)\right) \subseteq Ran\left(T\right)$ for each $n$. We want to show that $\left\{\xi_j\right\}_{j \in J}$ is a Riesz basis. Suppose that $\sum_{j \in J} \alpha_j \xi_j=0$, where $\alpha_j \in \mathcal{A}$. Fix an $n \in J$, then $Q_n T \xi \in Ran\left(T\right)$, so there exists $\eta_n \in \mathcal{X}$ such that $T \eta_n=Q_n T \xi$. Therefore we get
	\[
	\left\langle \eta_n, \xi_j\right\rangle= \begin{cases}\left\langle \xi, \xi_n\right\rangle & \text{if } j=n, \\ 0 & \text{if } j \neq n .\end{cases}
	\]
	Now for any $\xi \in \mathcal{X}$ we have
	\[
	\left\langle \xi, \alpha_n \xi_n\right\rangle=\left\langle \xi, \xi_n\right\rangle \alpha_n^*=\sum_{j \in J}\left\langle \eta_n, \xi_j\right\rangle \alpha_j^*=\sum_{j \in J}\left\langle \eta_n, \alpha_j \xi_j\right\rangle=\left\langle \eta_n, \sum_{j \ in J} \alpha_j \xi_j\right\rangle=0,
	\]
	which implies that $\alpha_n \xi_n=0$.
\end{proof}
\begin{theorem}\label{thrm2.22}
	Consider a frame $\left\{\xi_j\right\}_{j \in J}$ of a Hilbert $\mathcal{A}$-module $\mathcal{X}$, finitely or countably generated over a unital pro-$C^*$-algebra $\mathcal{A}$. This frame can be characterized as a Riesz basis under the following conditions: Firstly, each $\xi_n \neq 0$, and secondly, for every $n \in J$, we have $Q_n\left(Ran\left(T\right)\right) \subseteq Ran\left(T\right)$. Herein, $T$ represents the analysis operator associated with $\left\{\xi_j\right\}_{j \in J}$.
\end{theorem}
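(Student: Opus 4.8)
The plan is to prove the stated equivalence in two directions, in the same way as the preceding theorem; here $Q_n$ denotes the coordinate projection of $\ell^2(\mathcal{A})$ onto its $n$-th component, and $T\colon\mathcal{X}\to\ell^2(\mathcal{A})$, $T\xi=\{\langle\xi,\xi_j\rangle\}_{j\in J}$, is the analysis operator of the frame. Before either direction I would record the convenient reformulation of the Riesz condition: $\{\xi_j\}_{j\in J}$ is a Riesz basis if and only if $\xi_n\neq 0$ for all $n$ and, whenever $\sum_{j\in J}\alpha_j\xi_j=0$ with $\alpha_j\in\mathcal{A}$, one has $\alpha_j\xi_j=0$ for every $j$.

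For the ``only if'' direction, assume $\{\xi_j\}_{j\in J}$ is a Riesz basis. The requirement $\xi_n\neq 0$ is built into the definition, so only $Q_n(\mathrm{Ran}(T))\subseteq\mathrm{Ran}(T)$ needs proof. I would take $\alpha=\{\alpha_j\}_{j\in J}\in\mathrm{Ran}(T)^{\perp}$; then for every $\xi\in\mathcal{X}$,
\[
0=\langle T\xi,\alpha\rangle=\sum_{j\in J}\langle\xi,\xi_j\rangle\alpha_j^*=\Bigl\langle\xi,\sum_{j\in J}\alpha_j\xi_j\Bigr\rangle,
\]
so $\sum_{j\in J}\alpha_j\xi_j=0$ and hence $\alpha_j\xi_j=0$ for all $j$. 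Fixing $n$, this gives $\langle\xi,\xi_n\rangle\alpha_n^*=\langle\xi,\alpha_n\xi_n\rangle=0$ for all $\xi$, i.e.\ $\alpha\perp Q_n(\mathrm{Ran}(T))$. Thus $\mathrm{Ran}(T)^{\perp}\subseteq Q_n(\mathrm{Ran}(T))^{\perp}$, and applying $\perp$ to both sides gives $Q_n(\mathrm{Ran}(T))\subseteq Q_n(\mathrm{Ran}(T))^{\perp\perp}\subseteq\mathrm{Ran}(T)^{\perp\perp}=\mathrm{Ran}(T)$.

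For the ``if'' direction, assume $\xi_n\neq 0$ and $Q_n(\mathrm{Ran}(T))\subseteq\mathrm{Ran}(T)$ for every $n\in J$, and let $\{\alpha_j\}_{j\in J}\subseteq\mathcal{A}$ satisfy $\sum_{j\in J}\alpha_j\xi_j=0$; I must show $\alpha_n\xi_n=0$ for each $n$. Fix $n$ and $\xi\in\mathcal{X}$. Since $Q_nT\xi\in Q_n(\mathrm{Ran}(T))\subseteq\mathrm{Ran}(T)$, pick $\eta_n\in\mathcal{X}$ with $T\eta_n=Q_nT\xi$; comparing coordinates, $\langle\eta_n,\xi_j\rangle=\langle\xi,\xi_n\rangle$ if $j=n$ and $\langle\eta_n,\xi_j\rangle=0$ if $j\neq n$. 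Then
\[
\langle\xi,\alpha_n\xi_n\rangle=\langle\xi,\xi_n\rangle\alpha_n^*=\sum_{j\in J}\langle\eta_n,\xi_j\rangle\alpha_j^*=\Bigl\langle\eta_n,\sum_{j\in J}\alpha_j\xi_j\Bigr\rangle=0,
\]
and as $\xi$ is arbitrary, $\alpha_n\xi_n=0$, which is exactly the Riesz condition.

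The one genuinely delicate point is the passage to orthogonal complements in the first direction: a closed $\mathcal{A}$-submodule of a Hilbert pro-$C^*$-module need not be orthogonally complemented, so the identity $\mathrm{Ran}(T)^{\perp\perp}=\mathrm{Ran}(T)$ is not automatic. I expect this to be the main obstacle, and I would dispatch it using the standing hypothesis that $\mathcal{X}$ is finitely or countably generated over a unital $\mathcal{A}$ together with the fact that the analysis operator of a frame has closed, orthogonally complemented range (so $\mathrm{Ran}(T)^{\perp\perp}=\mathrm{Ran}(T)$); everything else reduces to the routine $\mathcal{A}$-linear computations displayed above, which are identical to those in the proof of the preceding theorem.
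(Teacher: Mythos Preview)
Your proof is correct and follows essentially the same argument as the paper's: both directions proceed via the same orthogonal-complement calculation and the same construction of $\eta_n$ with $T\eta_n=Q_nT\xi$. In fact you are more careful than the paper on the one subtle point, since the paper simply asserts $Q_n(\mathrm{Ran}(T))\subseteq\mathrm{Ran}(T)$ from $\mathrm{Ran}(T)^{\perp}\subseteq Q_n(\mathrm{Ran}(T))^{\perp}$ without invoking the complementability of $\mathrm{Ran}(T)$.
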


\begin{proof}
	Start by assuming that our frame $\left\{\xi_j\right\}_{j \in J}$ is indeed a Riesz basis. Given a sequence $\alpha=\left\{\alpha_j\right\}_{j \in J}$ within $l^2(\mathcal{A})$, if their linear combination with the frame elements results in zero, i.e., $\sum_{j \in J} \alpha_j \xi_j=0$, it directly implies that every term of the form $\alpha_j \xi_j$ is zero.
	
	For a sequence $\alpha$ that is orthogonal to $Ran\left(T\right)$ and any element $\xi$ from $\mathcal{X}$, one can write:
	\[
	0 = \sum_{j \in J}\left\langle \xi, \xi_j\right\rangle \alpha_j^*.
	\]
	From this, it is evident that for all $j$, $\alpha_j \xi_j=0$. Using this, we can deduce:
	\[
	\sum_{j=1}^n\left\langle \xi, \xi_j\right\rangle \alpha_j^* = 0.
	\]
	This result suggests that the sequence $\alpha$ is in the orthogonal space of $Q_n\left(Ran\left(T\right)\right)$. This leads to the conclusion that $Q_n\left(Ran\left(T\right)\right) \subseteq Ran\left(T\right)$.
	
	Conversely, given that the inclusion is true for all $n$, to demonstrate that $\left\{\xi_j\right\}_{j \in J}$ forms a Riesz basis, take an arbitrary linear combination of the form $\sum_{j \in J} \alpha_j \xi_j=0$, with coefficients $\alpha_j$ from $\mathcal{A}$. Picking any $n \in J$, and given that $Q_n T \xi$ lies in $Ran\left(T\right)$, there must be some $\eta_n$ in $\mathcal{X}$ for which:
	\[
	T \eta_n=Q_n T \xi.
	\]
	This relationship provides:
	\[
	\left\langle \eta_n, \xi_j\right\rangle = \left\langle \xi, \xi_n\right\rangle \text{ when } j=n \text{ and } 0 \text{ otherwise}.
	\]
	Upon evaluating any element $\xi$ in $\mathcal{X}$, we deduce:
	\[
	\left\langle \xi, \alpha_n \xi_n\right\rangle=0,
	\]
	leading to the conclusion that $\alpha_n \xi_n=0$.
\end{proof}

\begin{corollary}
	Let the set \(\left\{\xi_i\right\}_{i \in I}\) be a frame for \(\mathcal{X}\). This set qualifies as a Riesz basis if and only if:
	\begin{enumerate}
		\item For all \(i \in I\), we have \(\xi_i \neq 0\).
		\item Given that \(\sum_{i \in I} \alpha_i \xi_i = 0\) for a particular sequence \(\left\{\alpha_i ; i \in I\right\} \in l^2(\mathcal{A})\), it necessitates that \(\alpha_i \xi_i = 0\) for every \(i \in I\).
	\end{enumerate}
\end{corollary}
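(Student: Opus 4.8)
\noindent The plan is to obtain this corollary as a straightforward consequence of Theorem~\ref{thrm2.22}, rather than re-running the whole argument from scratch. First I would settle the ``only if'' direction: if $\{\xi_i\}_{i\in I}$ is a Riesz basis, then condition (1) is part of the definition, and condition (2) is merely the instance of the defining property with $S=I$. Here one should note that for $\{\alpha_i\}_{i\in I}\in l^2(\mathcal{A})$ the series $\sum_{i\in I}\alpha_i\xi_i$ does converge --- it is the image of $\{\alpha_i\}_{i\in I}$ under the synthesis operator $T^{*}$ --- so the hypothesis $\sum_{i\in I}\alpha_i\xi_i=0$ makes sense, and the Riesz property then forces $\alpha_i\xi_i=0$ for every $i$.

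For the ``if'' direction, assume (1) and (2). The key step is to show that (2) alone already implies $Q_n(Ran(T))\subseteq Ran(T)$ for every $n\in I$, where $T$ is the analysis operator of $\{\xi_i\}_{i\in I}$; Theorem~\ref{thrm2.22} then finishes the proof. To get the inclusion I would repeat the computation from the first half of the proof of Theorem~\ref{thrm2.22}: pick $\alpha=\{\alpha_i\}_{i\in I}\in Ran(T)^{\perp}$; for every $\xi\in\mathcal{X}$ we have $\langle\xi,\sum_{i\in I}\alpha_i\xi_i\rangle=\sum_{i\in I}\langle\xi,\xi_i\rangle\alpha_i^{*}=0$, whence $\sum_{i\in I}\alpha_i\xi_i=0$ and so, by (2), $\alpha_i\xi_i=0$ for all $i$. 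Therefore $\sum_{i\le n}\langle\xi,\xi_i\rangle\alpha_i^{*}=\sum_{i\le n}\langle\xi,\alpha_i\xi_i\rangle=0$ for every $\xi\in\mathcal{X}$ and every $n$, which says precisely that $\alpha\perp Q_n(Ran(T))$. Hence $Ran(T)^{\perp}\subseteq Q_n(Ran(T))^{\perp}$, and taking orthogonal complements (using, as in Theorem~\ref{thrm2.22}, that $Ran(T)$ is closed and complemented in $l^2(\mathcal{A})$) yields $Q_n(Ran(T))\subseteq Ran(T)$.

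I expect the only subtle point to be reconciling the two forms of the non-degeneracy condition: the definition of a Riesz basis quantifies over all $\mathcal{A}$-linear combinations $\sum_{j\in S}\alpha_j\xi_j$ with $S\subseteq I$ and $\alpha_j\in\mathcal{A}$, whereas condition (2) only mentions $l^2(\mathcal{A})$-sequences indexed by the whole of $I$. What legitimizes the reduction is that the ``if'' half of Theorem~\ref{thrm2.22} already delivers the full Riesz-basis conclusion --- it begins from an arbitrary relation $\sum_{j\in J}\alpha_j\xi_j=0$ with $\alpha_j\in\mathcal{A}$ --- so once the inclusion $Q_n(Ran(T))\subseteq Ran(T)$ is extracted from (2), there is nothing left to prove. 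One should also double-check that the orthogonal-complement manipulations are valid in this pro-$C^{*}$ setting, i.e.\ that the frame hypothesis indeed makes $Ran(T)$ a closed, complemented submodule of $l^2(\mathcal{A})$, since that is exactly what allows passing from $Ran(T)^{\perp}\subseteq Q_n(Ran(T))^{\perp}$ back to $Q_n(Ran(T))\subseteq Ran(T)$.
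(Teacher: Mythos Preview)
Your proposal is correct and follows essentially the same approach as the paper: the paper's proof consists solely of the sentence ``Consult the detailed argument presented in Theorem~\ref{thrm2.22},'' and you have spelled out precisely how that reference unwinds, with the same orthogonal-complement computation for the ``if'' direction. Your added remarks on the convergence of $\sum_{i\in I}\alpha_i\xi_i$ and on the closedness/complementation of $Ran(T)$ are reasonable points of care that the paper leaves implicit.
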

\begin{proof}

		Consult the detailed argument presented in Theorem \ref{thrm2.22}.

\end{proof}

\begin{definition}\cite{Haddad}
	Let's consider the sequence \(\Gamma = \left\{\Gamma_i \in Hom_\mathcal{A}^*\left(\mathcal{X}, \mathcal{\mathcal{X}}_i\right)\right\}_{i \in I}\). This sequence is defined as a \(g\)-frame for \(\mathcal{X}\) in relation to \(\left\{\mathcal{X}_i\right\}_{i \in I}\) if two positive constants \(C\) and \(D\) can be identified such that, for each element \(\xi \in \mathcal{X}\),
	\[
	C\langle \xi, \xi\rangle \leq \sum_{i \in I}\left\langle \Gamma_i \xi, \Gamma_i \xi\right\rangle \leq D\langle \xi, \xi\rangle .
	\]
	The constants \(C\) and \(D\) are termed the g-frame boundaries for \(\Gamma\). The \(g\)-frame is recognized as tight when \(C = D\), and it's described as Parseval when \(C\) and \(D\) are both 1. When solely the upper limit is necessary, then \(\Gamma\) is described as a \(g\)-Bessel sequence. Additionally, if for each index \(i \in I\), it holds that \(\mathcal{X}_i = \mathcal{X}\), it's described as a \(g\)-frame for \(\mathcal{X}\) with respect to \(\mathcal{X}\).
\end{definition}

 \begin{definition}
 	Consider a \(g\)-frame sequence \(\left\{\Gamma_i \in Hom_{\mathcal{A}}^*\left(\mathcal{X}, \mathcal{X}_i\right)\right\}_{i \in I}\) for \(\mathcal{X}\), in association with \(\left\{\mathcal{X}_i\right\}_{i \in I}\). This sequence is identified as a \(g\)-Riesz basis if the following conditions are met:
 	\begin{enumerate}
 			\item[(i)]  $\Gamma_i \neq 0$ for all $i \in I$.
 			\item[(ii)] Given any \(\mathcal{A}\)-linear combination expressed as \(\sum_{j \in S} \Gamma_j^* g_j\) which equates to zero, each individual term \(\Gamma_j^* g_j\) must also be zero. Here, \(g_j\) is an element of \(\mathcal{X}_i\) and \(S\) is a subset of \(I\).
 	\end{enumerate}
 \end{definition}

\begin{definition}
	Let $\left\{\Gamma_i \in Hom_{\mathcal{A}}^*\left(\mathcal{X}, \mathcal{X}_i\right): i \in I\right\}$ represent a $g$-frame associated with $\mathcal{X}$, and indexed by $\left\{\mathcal{X}_i\right\}_{i \in I}$. We term this $g$-frame as a $g$-Riesz basis if:
	\begin{enumerate}
		\item For every index $i \in I$, $\Gamma_i$ is distinct from zero.
		\item If any $\mathcal{A}$-linear mixture, denoted as $\sum_{j \in S} \Gamma_j^* g_j$, results in zero, it implies that each term, specifically $\Gamma_j^* g_j$, must also be null. Here, $g_j$ is an element of $\mathcal{X}_i$ and $S$ is included in $I$.
	\end{enumerate}
\end{definition}

\begin{theorem}\label{thrm2.10}
	Assume \(\left\{\Gamma_j\right\}_{j \in J}\) constitutes a \(g\)-frame for a Hilbert \(\mathcal{A}\)-module \(\mathcal{M}\) that is either finitely or countably generated, associated with \(\left\{\mathcal{N}_j\right\}\). Let the analysis operator related to \(\left\{\Gamma_j\right\}_{j \in J}\) be denoted by \(T_{\Gamma}: \mathcal{M} \longrightarrow \bigoplus_{j \in J} \mathcal{N}_j\). The set \(\left\{\Gamma_j\right\}_{j \in J}\) forms a \(g\)-Riesz basis if and only if \(\Gamma_n \neq 0\) for each \(n\), and the projection \(Q_n\left(\operatorname{Ran} T_{\Gamma}\right)\) is contained within \(\operatorname{Ran} T_{\Gamma}\).
\end{theorem}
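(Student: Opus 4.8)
The plan is to mirror the proof of Theorem \ref{thrm2.22} almost verbatim, transporting it from ordinary frames to $g$-frames by replacing the scalar coefficients $\alpha_j\in\mathcal{A}$ with elements $g_j\in\mathcal{N}_j$, the scalars $\langle\xi,\xi_j\rangle$ with the vectors $\Gamma_j\xi$, and the products $\alpha_j\xi_j$ with $\Gamma_j^* g_j$. I would begin by recording the shape of the operators involved: $T_\Gamma\xi=\{\Gamma_j\xi\}_{j\in J}$, its adjoint (the synthesis operator) $T_\Gamma^*\{g_j\}_{j\in J}=\sum_{j\in J}\Gamma_j^* g_j$, and $Q_n$ the coordinate projection of $\bigoplus_{j\in J}\mathcal{N}_j$ onto its $n$-th summand. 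A preliminary observation I would make is that, because $\{\Gamma_j\}_{j\in J}$ is a $g$-frame, the lower $g$-frame inequality combined with Proposition \ref{prop2.2} makes $T_\Gamma$ bounded below, so $\operatorname{Ran}T_\Gamma$ is a closed, orthogonally complemented submodule of $\bigoplus_{j\in J}\mathcal{N}_j$; this is the fact that licences passing between a submodule and its orthogonal complement.

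For the forward implication I would assume $\{\Gamma_j\}_{j\in J}$ is a $g$-Riesz basis, note that $\Gamma_n\neq 0$ for all $n$ is immediate from the definition, and then, given $g=\{g_j\}_{j\in J}\in(\operatorname{Ran}T_\Gamma)^{\perp}$, compute for every $\xi\in\mathcal{M}$
\[
0=\langle T_\Gamma\xi,g\rangle=\sum_{j\in J}\langle\Gamma_j\xi,g_j\rangle=\Big\langle\xi,\sum_{j\in J}\Gamma_j^* g_j\Big\rangle,
\]
deduce $\sum_{j\in J}\Gamma_j^* g_j=0$, and invoke the $g$-Riesz property to get $\Gamma_j^* g_j=0$ for all $j$. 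It then follows that $\langle Q_nT_\Gamma\xi,g\rangle=\langle\Gamma_n\xi,g_n\rangle=\langle\xi,\Gamma_n^* g_n\rangle=0$ for all $\xi$, so $g\in(Q_n(\operatorname{Ran}T_\Gamma))^{\perp}$; taking orthogonal complements yields $Q_n(\operatorname{Ran}T_\Gamma)\subseteq\operatorname{Ran}T_\Gamma$.

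For the converse I would assume $\Gamma_n\neq 0$ for all $n$ and $Q_n(\operatorname{Ran}T_\Gamma)\subseteq\operatorname{Ran}T_\Gamma$ for all $n$, suppose $\sum_{j\in J}\Gamma_j^* g_j=0$ (a general index subset $S$ being handled by putting $g_j=0$ off $S$), fix $n\in J$ and an arbitrary $\xi\in\mathcal{M}$, and use the inclusion to produce $\eta_n\in\mathcal{M}$ with $T_\Gamma\eta_n=Q_nT_\Gamma\xi$, i.e. $\Gamma_n\eta_n=\Gamma_n\xi$ and $\Gamma_j\eta_n=0$ for $j\neq n$. Then I would chain
\[
\langle\xi,\Gamma_n^* g_n\rangle=\langle\Gamma_n\xi,g_n\rangle=\langle\Gamma_n\eta_n,g_n\rangle=\sum_{j\in J}\langle\Gamma_j\eta_n,g_j\rangle=\Big\langle\eta_n,\sum_{j\in J}\Gamma_j^* g_j\Big\rangle=0,
\]
and conclude $\Gamma_n^* g_n=0$ since $\xi$ is arbitrary. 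I expect the main obstacle to be the structural fact underlying the orthogonal-complement step — that $\operatorname{Ran}T_\Gamma$ is closed and complemented — since the naive implication ``$M^{\perp}\subseteq N^{\perp}\Rightarrow N\subseteq M$'' fails for general submodules of Hilbert (pro-)$C^*$-modules; a secondary point requiring care is justifying that the inner product commutes with the possibly infinite sum defining $T_\Gamma^*$, which rests on the $g$-Bessel bound.
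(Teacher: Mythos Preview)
Your proposal is correct and follows essentially the same route as the paper's proof: both directions proceed exactly via the orthogonal-complement argument you describe, producing $\eta_n$ (the paper writes $\xi_n$ or $f_n$) from the inclusion $Q_n(\operatorname{Ran}T_\Gamma)\subseteq\operatorname{Ran}T_\Gamma$ and chaining the same string of inner products. If anything, you are more careful than the paper, which takes the step from $(\operatorname{Ran}T_\Gamma)^{\perp}\subseteq Q_n(\operatorname{Ran}T_\Gamma)^{\perp}$ to $Q_n(\operatorname{Ran}T_\Gamma)\subseteq\operatorname{Ran}T_\Gamma$ without comment; your remark that closedness and complementation of $\operatorname{Ran}T_\Gamma$ are needed here is well placed.
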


\begin{proof}
	
	Start by assuming that the sequence $\left\{\Gamma_j\right\}_{j \in J}$ serves as a g-Riesz basis. Given $g = \left\{g_j\right\}_{j \in J} \in\left(\text{Ran} T_{\Gamma}\right)^{\perp}$, for every $\xi \in \mathcal{M}$, it follows that
	$$
	0 = \left\langle \left\{g_j\right\}_{j \in J}, T_{\Gamma} \xi\right\rangle = \left\langle T_{\Gamma}^*\left\{g_j\right\}_{j \in J}, \xi\right\rangle = \left\langle\sum_{j \in J} \Gamma_j^* g_j, \xi\right\rangle.
	$$
	
	From this, we deduce that $\sum_{j \in J} \Gamma_j^* g_j = 0$ and thus, $\Gamma_j^* g_j = 0$ for every $j \in J$.
	
	Now, for any $\xi \in \mathcal{M}$, consider:
	$$
	\left\langle\left\{g_j\right\}_{j \in J}, Q_n\left(T_{\Gamma} \xi\right)\right\rangle = \left\langle\left\{g_j\right\}_{j \in J}, Q_n\left(\left\{\Gamma_j \xi\right\}_{j \in J}\right)\right\rangle = \left\langle g_n, \Gamma_n \xi\right\rangle = \left\langle \Gamma_n^* g_n, \xi\right\rangle = 0.
	$$
	
	This implies $\left(\operatorname{Ran} T_{\Gamma}\right)^{\perp} \subseteq Q_n\left(\operatorname{Ran} T_{\Gamma}\right)^{\perp}$. Therefore, $Q_n\left(\operatorname{Ran} T_{\Gamma}\right) \subseteq \operatorname{Ran} T_{\Gamma}$.
	
	Moving forward, if we assume that $Q_n\left(\operatorname{Ran} T_{\Gamma}\right) \subseteq \operatorname{Ran} T_{\Gamma}$ for all $n$, and if $\sum_{j \in J} \Gamma_j^* g_j = 0$, then by fixing any $n \in J$, it's evident that $Q_n T_{\Gamma} \xi \in \operatorname{Ran} T_{\Gamma}$. Hence, there's an $\xi_n \in \mathcal{M}$ such that $Q_n T_{\Gamma} \xi = T_{\Gamma} \xi_n$.
	
	Now, for every $\xi \in \mathcal{M}$, it is true that:
	$$
	\left\langle \xi, \Gamma_n^* g_n\right\rangle = \left\langle \Gamma_n \xi, g_n\right\rangle = \sum_{j \in J}\left\langle \Gamma_j f_n, g_j\right\rangle = \left\langle f_n, \sum_{j \in J} \Gamma_j^* g_j\right\rangle = 0,
	$$
	leading to the conclusion that $\Gamma_j^* g_j = 0$ for every $n \in J$.
	
\end{proof}

\begin{corollary}
	Suppose that $\left\{\Gamma_i \in Hom_{\mathcal{A}}^*\left(\mathcal{X}, \mathcal{X}_i\right): i \in I\right\}$ constitutes a g-frame for $\mathcal{X}$ relative to  $\left\{\mathcal{X}_i\right\}_{i \in I}$. This set is then recognized as a $g$-Riesz basis under the conditions that:
	\begin{enumerate}
		\item[(i)] Every $\Gamma_i \neq 0$  for all $i$ in the index set $I$.
		\item[(ii)] For a given sequence $\left\{g_i\right\}_{i \in I}$ belonging to $l^2\left(\left\{\mathcal{X}_i\right\}_{i \in I}\right)$ such that $\sum_{i \in I} \Gamma_i^* g_i = 0$, it is imperative that $\Gamma_i^* g_i = 0$ for all individual indices $i$ in $I$.
	\end{enumerate}
\end{corollary}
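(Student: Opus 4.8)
The plan is to obtain this Corollary as an immediate specialization of Theorem \ref{thrm2.10}, which already characterizes the $g$-Riesz basis property for a $g$-frame $\{\Gamma_i\}$ by the two conditions $\Gamma_n\neq 0$ for all $n$ and $Q_n(\operatorname{Ran} T_\Gamma)\subseteq\operatorname{Ran} T_\Gamma$ for all $n$, where $T_\Gamma$ is the analysis operator and $Q_n$ the coordinate projection used there. Condition (i) of the Corollary is literally the first of these, so the only genuine work is to show that condition (ii) is the correct ``$l^2$-coefficient'' reformulation of the range inclusion appearing in the second.

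First I would record the identity $T_\Gamma^*\{g_i\}_{i\in I}=\sum_{i\in I}\Gamma_i^*g_i$ for the synthesis operator $T_\Gamma^*$, which is legitimate because a $g$-frame is in particular a $g$-Bessel sequence; consequently $\sum_{i\in I}\Gamma_i^*g_i=0$ is equivalent to $\{g_i\}\in\ker T_\Gamma^*=(\operatorname{Ran} T_\Gamma)^{\perp}$. Next I would compute, for $\{g_i\}\in l^2(\{\mathcal{X}_i\}_{i\in I})$ and $\xi\in\mathcal{X}$,
\[
\big\langle \{g_i\},\,Q_n(T_\Gamma\xi)\big\rangle=\langle g_n,\Gamma_n\xi\rangle=\langle \Gamma_n^*g_n,\xi\rangle,
\]
so that $\{g_i\}\in Q_n(\operatorname{Ran} T_\Gamma)^{\perp}$ holds precisely when $\Gamma_n^*g_n=0$. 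Combining the two observations, condition (ii) --- every $\{g_i\}\in(\operatorname{Ran} T_\Gamma)^{\perp}$ satisfies $\Gamma_i^*g_i=0$ for all $i$ --- is exactly the inclusion $(\operatorname{Ran} T_\Gamma)^{\perp}\subseteq\bigcap_n Q_n(\operatorname{Ran} T_\Gamma)^{\perp}$.

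To finish, I would pass to orthogonal complements: since $\{\Gamma_i\}$ is a $g$-frame, its lower bound forces $T_\Gamma$ to be bounded below, hence $\operatorname{Ran} T_\Gamma$ is closed and orthogonally complemented (Proposition \ref{prop2.2} applied to $T_\Gamma^*$), and likewise for each $Q_n(\operatorname{Ran} T_\Gamma)$, so the double-complement identities $A^{\perp\perp}=A$ are available. Taking $\perp$ of the inclusion above turns it into $Q_n(\operatorname{Ran} T_\Gamma)\subseteq\operatorname{Ran} T_\Gamma$ for every $n$, whereupon Theorem \ref{thrm2.10} applies directly and yields the stated characterization. I expect the only real obstacle to be this last point of care --- justifying the double-complement manipulations and, equivalently, the passage between finitely supported $\mathcal{A}$-linear combinations (as in the definition of a $g$-Riesz basis) and arbitrary $l^2(\{\mathcal{X}_i\}_{i\in I})$ sequences; an alternative that sidesteps it is simply to rerun the proof of Theorem \ref{thrm2.10} verbatim with the hypothesis ``$\sum_{i\in I}\Gamma_i^*g_i=0$'' in place of ``$\{g_i\}\perp\operatorname{Ran} T_\Gamma$'', the two being identified by the synthesis-operator identity above.
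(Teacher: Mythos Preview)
Your proposal is correct and matches the paper's approach: the paper's entire proof of this Corollary is the single line ``Refer to the demonstration provided in Theorem \ref{thrm2.10},'' i.e.\ exactly your suggested alternative of rerunning that argument with the synthesis-operator identification $T_\Gamma^*\{g_i\}=\sum_i\Gamma_i^*g_i$. Your more detailed route via orthogonal complements is a faithful unpacking of the same reasoning already contained in that proof.
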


\begin{proof}
Refer to the demonstration provided in Theorem \ref{thrm2.10}.	
\end{proof}

\begin{definition}

	Consider a sequence $\left\{\Gamma_i \in Hom_{\mathcal{A}}^*\left(\mathcal{X}, \mathcal{X}_i\right): i \in I\right\}$. This sequence is referred to as a $g$-orthonormal basis for $\mathcal{X}$ relative to $\{ \mathcal{X}_i\}_{i \in I}$ provided
	\begin{equation}\label{eq2.3}
	\left\langle\Gamma_i^* g_i, \Gamma_j^* g_j\right\rangle = \delta_{i, j}\left\langle g_i, g_j\right\rangle \text{ whenever } i, j \in I, g_i \in \mathcal{X}_i, \text{ and } g_j \in \mathcal{X}_j,
	\end{equation}
	 and
	$$
	\sum_{i \in I}\left\langle\Gamma_i \xi, \Gamma_i \xi\right\rangle=\langle \xi, \xi\rangle, \text { for all } \xi \in \mathcal{X} .
	$$
\end{definition}
\begin{definition}\cite{Rouma}
	
	Given $K \in Hom_{\mathcal{A}}^{\ast}(\mathcal{X})$ and a sequence in $\mathcal{X}$ represented by $\{ \xi_{i} \}_{i=0}^{\infty}$, this sequence is termed a $K$-frame for $\mathcal{X}$ if and only if there are two distinct positive constants, denoted as $A$ and $B$, such that
	$$A \langle K^{\ast} \xi, K^{\ast}\xi \rangle_{\mathcal{X}} \leq \sum_{i \in I}\left\langle \xi, \xi_{i}\right\rangle \left\langle \xi_{i}, \xi \right\rangle \leq B \langle  \xi, \xi \rangle_{\mathcal{X}} \; \; \forall \xi \in \mathcal{X} $$
The constants $A$ and $B$ are named the lower and upper bounds of the $K$-frame $\{ \xi_{i} \}_{i \in I}$ respectively.
\end{definition}
For a given $K$-Frame represented by $\{ \xi_i \}_{i \in I}$, the analysis operator is defined by:

$$
T: \mathcal{X} \rightarrow l^2(\mathcal{A}), \quad T(\xi)=\left\{\left\langle \xi, \xi_i\right\rangle\right\}_{i \in I} .
$$
Upon examining this, one can ascertain that its adjoint operator is expressed as:
\begin{eqnarray}\label{eq2.5}
T^*: l^2(\mathcal{A}) \rightarrow \mathcal{X}, \quad T^*\left(\left\{\alpha_i\right\}_{i \in I}\right)=\sum_{i \in I} \alpha_i \xi_i .
\end{eqnarray}
This adjoint operator, $T^*$, is termed the synthesis operator associated with $\{ \xi_i\}_{i \in I}$.

When combining $T^*$ and $T$, the result is the frame operator $S$ for the $K$-frames, which can be articulated as:
$$
S: \mathcal{X} \rightarrow \mathcal{X}, \quad S(\xi)=T^* T(\xi)=\sum_{i \in I}\left\langle \xi, \xi_i\right\rangle \xi_i
$$
\begin{proposition}\cite{Rouma}\label{propo2.12}
The sequence \(\left\{\xi_i\right\}_{i \in I}\) is recognized as a Bessel sequence for \(\mathcal{X}\) bounded by \(D\) if, and only if, the operator \(T^*\) given by \ref{eq2.5} is both well-defined and bounded with \(\bar{p}_\mathcal{X}(T^*) \leq \sqrt{D}\).

\end{proposition}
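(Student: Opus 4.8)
The plan is to identify the synthesis operator $T^\ast$ of \eqref{eq2.5} with the $\mathcal A$-adjoint of the analysis operator $T$ given by $T\xi=\{\langle\xi,\xi_i\rangle\}_{i\in I}$, and then to translate back and forth between the operator estimate $\bar p_{\mathcal X}(T^\ast)\leq\sqrt D$ and the $\mathcal A$-valued estimate $\sum_i\langle\xi,\xi_i\rangle\langle\xi_i,\xi\rangle\leq D\langle\xi,\xi\rangle$. The computation behind everything is the identity, valid for every finite $F\subseteq I$ and $\xi\in\mathcal X$,
\[
\sum_{i\in F}\langle\xi,\xi_i\rangle\langle\xi_i,\xi\rangle=\Big\langle\sum_{i\in F}\langle\xi,\xi_i\rangle\,\xi_i,\ \xi\Big\rangle ,
\]
together with its countable version $\big\langle\sum_{i}\alpha_i\xi_i,\xi\big\rangle=\sum_{i}\alpha_i\langle\xi_i,\xi\rangle=\langle\alpha,T\xi\rangle_{l^2(\mathcal A)}$ for $\alpha=\{\alpha_i\}\in l^2(\mathcal A)$; once $T$ is known to take values in $l^2(\mathcal A)$, this says precisely that $T^\ast$ from \eqref{eq2.5} is adjoint to $T$, so that $\bar p_{\mathcal X}(T^\ast)=\bar p_{l^2(\mathcal A)}(T)$ for each $p\in S(\mathcal A)$.

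For the forward implication, assume $\{\xi_i\}_{i\in I}$ is Bessel with bound $D$. For each $\xi\in\mathcal X$, $p\in S(\mathcal A)$ and finite $F$, the identity above gives $\bar p_{l^2(\mathcal A)}(\{\langle\xi,\xi_i\rangle\}_{i\in F})^2=p(\sum_{i\in F}\langle\xi,\xi_i\rangle\langle\xi_i,\xi\rangle)\leq D\,p(\langle\xi,\xi\rangle)=D\,\bar p_{\mathcal X}(\xi)^2$; combined with the convergence that is part of the definition of a Bessel sequence, this shows $T$ maps $\mathcal X$ into $l^2(\mathcal A)$ with $\bar p_{l^2(\mathcal A)}(T)\leq\sqrt D$. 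A routine Cauchy estimate — using the pairing identity and the Cauchy--Schwarz inequality in $l^2(\mathcal A)$ — shows that $\sum_i\alpha_i\xi_i$ converges in $\mathcal X$ for every $\alpha\in l^2(\mathcal A)$, so $T^\ast$ of \eqref{eq2.5} is well defined, is the adjoint of $T$, and hence satisfies $\bar p_{\mathcal X}(T^\ast)=\bar p_{l^2(\mathcal A)}(T)\leq\sqrt D$.

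For the converse, assume $T^\ast$ of \eqref{eq2.5} is well defined and bounded with $\bar p_{\mathcal X}(T^\ast)\leq\sqrt D$. Fix $\xi\in\mathcal X$, $p\in S(\mathcal A)$, and a finite $F\subseteq I$; put $\alpha^{F}=\{\langle\xi,\xi_i\rangle\}_{i\in F}\in l^2(\mathcal A)$ (extended by zero) and $s_F=p(\sum_{i\in F}\langle\xi,\xi_i\rangle\langle\xi_i,\xi\rangle)\geq0$. Since $\bar p_{l^2(\mathcal A)}(\alpha^{F})^2=s_F$, the identity and Cauchy--Schwarz yield
\[
s_F=p\big(\langle T^\ast\alpha^{F},\xi\rangle\big)\leq\bar p_{\mathcal X}(T^\ast\alpha^{F})\,\bar p_{\mathcal X}(\xi)\leq\sqrt D\,\sqrt{s_F}\,\bar p_{\mathcal X}(\xi) ,
\]
whence $s_F\leq D\,p(\langle\xi,\xi\rangle)$ for all finite $F$ and all $p$. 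It then remains to promote this uniform scalar bound to the convergence of $\sum_i\langle\xi,\xi_i\rangle\langle\xi_i,\xi\rangle$ in $\mathcal A$ and the order inequality $\sum_i\langle\xi,\xi_i\rangle\langle\xi_i,\xi\rangle\leq D\langle\xi,\xi\rangle$, i.e.\ that $\{\xi_i\}_{i\in I}$ is Bessel with bound $D$.

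I expect this last step to be the main obstacle. The pro-$C^\ast$ setting forces every estimate to be carried out seminorm by seminorm over $S(\mathcal A)$, and passing from the scalar bounds on the finite partial sums to genuine convergence in $\mathcal A$ and to the $\mathcal A$-valued order inequality is exactly where the positivity of the partial sums together with Proposition~\ref{Prop2.4} (applied in the $C^\ast$-algebra associated with each $p$) and the Cauchy--Schwarz inequality for Hilbert pro-$C^\ast$-modules become indispensable; everything else is the bookkeeping of adjoints and the harmless identity $\bar p_{\mathcal X}(T^\ast)=\bar p_{l^2(\mathcal A)}(T)$.
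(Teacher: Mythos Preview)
The paper does not prove this proposition; it is imported from \cite{Rouma} without argument, so there is no in-paper proof to compare against. What follows is an assessment of your proposal on its own terms.

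Your forward implication is correct and standard. In the converse you reach the seminorm estimate $p\big(\sum_{i\in F}\langle\xi,\xi_i\rangle\langle\xi_i,\xi\rangle\big)\leq D\,p(\langle\xi,\xi\rangle)$ for all finite $F$ and all $p\in S(\mathcal A)$, and you correctly flag the passage to the $\mathcal A$-valued Bessel inequality as the remaining obstacle. However, the tool you name, Proposition~\ref{Prop2.4}, is not the right one: it concerns uniformly invertible operators and delivers two-sided bounds, neither of which is available or needed here. More to the point, a seminorm bound $p(a)\leq p(b)$ for positive $a,b$ does \emph{not} yield $a\leq b$ in $\mathcal A$ (already in an ordinary $C^\ast$-algebra two positive elements of equal norm need not be comparable), so your scalar estimate cannot be ``promoted'' to an order inequality by positivity and Cauchy--Schwarz alone.

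The standard repair is to use adjointability of the finite truncations rather than seminorm bookkeeping. For each finite $F$ the map $T_F^\ast(\{\alpha_i\}_{i\in F})=\sum_{i\in F}\alpha_i\xi_i$ is a bounded \emph{adjointable} $\mathcal A$-module map $\mathcal A^{|F|}\to\mathcal X$ with adjoint $T_F\xi=\{\langle\xi,\xi_i\rangle\}_{i\in F}$; as a restriction of $T^\ast$ it satisfies $\|T_F^\ast\|_\infty\leq\sqrt D$, hence $\|T_F\|_\infty\leq\sqrt D$. For adjointable operators one has the genuine module inequality $\langle T_F\xi,T_F\xi\rangle\leq\|T_F\|_\infty^{2}\langle\xi,\xi\rangle$, which gives $\sum_{i\in F}\langle\xi,\xi_i\rangle\langle\xi_i,\xi\rangle\leq D\langle\xi,\xi\rangle$ in $\mathcal A$ directly for every finite $F$, bypassing the seminorm route entirely. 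This is the substantive step your outline is missing.
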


\begin{definition}
	Given $K \in Hom_{\mathcal{A}}^{*}(\mathcal{X})$, we denote by $\Gamma$ the sequence $\left\{\Gamma_{i} \in Hom_{A}^{*}\left(\mathcal{X}, \mathcal{X}_{i}\right)\right\}_{i \in I}$. This sequence is termed a $K$-g-frame for $\mathcal{X}$ relative to $\left\{\mathcal{X}_{i}\right\}_{i \in I}$ provided there are two positive constants, $C$ and $D$, for which the following holds for every $\xi \in \mathcal{X}$:
	$$
	C\langle K^{*} \xi, K^{*}\xi \rangle \leq \sum_{i \in I}\left\langle\Gamma_{i} \xi, \Gamma_{i} \xi \right\rangle \leq D \langle \xi, \xi \rangle
	$$
In this context, the constants $C$ and $D$ represent the lower and upper boundaries of the $K$-g-frame. The $K$-g-frame is labeled as tight when $C = D$, and it takes on the name Parseval if both $C$ and $D$ are equal to 1.

\end{definition}

The analysis operator corresponding to a \(K\)-g-Frame \(\left\{\Gamma_i \in Hom_{\mathcal{A}}^*\left(\mathcal{X}, \mathcal{X}_i\right): i \in I\right\}\) can be defined as:
$$
T_{\Gamma}: \mathcal{X} \rightarrow l^2\left(\left\{\mathcal{X}_i\right\}_{i \in I}\right), \quad T_{\Gamma}(\xi)=\left\{\Gamma_i \xi\right\}_{i \in I} .
$$

Given the above definition, the corresponding adjoint operator of \(T_{\Gamma}\) can be represented as:
\begin{equation}\label{eq2.7}
T_{\Gamma}^*: l^2\left(\left\{\mathcal{X}_i\right\}_{i \in I}\right) \rightarrow \mathcal{X}, \quad T_{\Gamma}^*\left(\left\{g_i\right\}_{i \in I}\right)=\sum_{i \in I} \Gamma_i^* g_i .
\end{equation}
In this context, \(T_{\Gamma}^*\) is identified as the synthesis operator for the set \(\left\{\Gamma_i\right\}_{i \in I}\). 
When combining operations from \(T_{\Gamma}^*\) and \(T_{\Gamma}\), the frame operator \(S_{\Gamma}\) pertaining to the \(K\)-g-frames emerges as:
$$
S_{\Gamma}: \mathcal{X} \rightarrow \mathcal{X}, \quad S_{\Gamma}(\xi)=T_{\Gamma}^* T_{\Gamma}(\xi)=\sum_{i \in I} \Gamma_i^* \Gamma_i \xi
$$

\begin{proposition}\label{propo2.14}
	Let $\left\{ \Gamma_i \in Hom_{\mathcal{A}}^* \left(\mathcal{X}, \mathcal{X}_i\right) : i \in I \right\}$ be a set. This set is a $g$-Bessel sequence for $\mathcal{X}$ with a bound of $D$ if and only if the operator $T_{\Gamma}^*$, referenced in \ref{eq2.7}, is a properly defined bounded operator with the property that $\bar{p}_{\mathcal{X}}(T_{\Gamma}^*) \leq \sqrt{D}$.
\end{proposition}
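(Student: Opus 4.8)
The plan is to adapt, \emph{mutatis mutandis}, the proof of the scalar version, Proposition \ref{propo2.12}: replace the coefficient sequences in $l^2(\mathcal A)$ by sequences $\{g_i\}_{i\in I}\in l^2(\{\mathcal X_i\}_{i\in I})$ and use the Cauchy--Schwarz inequality of the Hilbert $\mathcal A$-module $l^2(\{\mathcal X_i\}_{i\in I})$ in place of the one in $l^2(\mathcal A)$, together with the identification $\langle\Gamma_i^*g_i,\eta\rangle=\langle g_i,\Gamma_i\eta\rangle$.

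First I would prove the direct implication. Assume $\{\Gamma_i\}_{i\in I}$ is a $g$-Bessel sequence with bound $D$, fix $\{g_i\}_{i\in I}\in l^2(\{\mathcal X_i\}_{i\in I})$, a finite $F\subseteq I$, and set $\eta=\sum_{i\in F}\Gamma_i^*g_i$. Then $\langle\eta,\eta\rangle=\sum_{i\in F}\langle g_i,\Gamma_i\eta\rangle=\big\langle\{g_i\}_{i\in F},\{\Gamma_i\eta\}_{i\in F}\big\rangle$, so the Cauchy--Schwarz inequality for Hilbert pro-$C^*$-modules gives, for each $p\in S(\mathcal A)$,
\[
\bar p_{\mathcal X}(\eta)^2=p_\alpha(\langle\eta,\eta\rangle)\le \bar p\big(\{g_i\}_{i\in F}\big)\,\bar p\big(\{\Gamma_i\eta\}_{i\in F}\big).
\]
Since $\sum_{i\in F}\langle\Gamma_i\eta,\Gamma_i\eta\rangle\le D\langle\eta,\eta\rangle$ and $p_\alpha$ is monotone on positive elements, $\bar p(\{\Gamma_i\eta\}_{i\in F})\le\sqrt D\,\bar p_{\mathcal X}(\eta)$; dividing by $\bar p_{\mathcal X}(\eta)$ (the case $\bar p_{\mathcal X}(\eta)=0$ being trivial) yields $\bar p_{\mathcal X}\big(\sum_{i\in F}\Gamma_i^*g_i\big)\le\sqrt D\,\bar p(\{g_i\}_{i\in F})$. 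Applied to differences $\sum_{i\in G\setminus F}\Gamma_i^*g_i$ this is a Cauchy criterion along the directed set of finite subsets of $I$; completeness of $\mathcal X$ then makes $\sum_{i\in I}\Gamma_i^*g_i$ convergent, so $T_\Gamma^*$ is well defined, and passing to the limit gives $\bar p_{\mathcal X}(T_\Gamma^*\{g_i\})\le\sqrt D\,\bar p(\{g_i\})$, i.e. $\bar p_{\mathcal X}(T_\Gamma^*)\le\sqrt D$.

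Conversely, assume $T_\Gamma^*$ is a well-defined bounded operator with $\bar p_{\mathcal X}(T_\Gamma^*)\le\sqrt D$. Its adjoint $T_\Gamma=(T_\Gamma^*)^*\in Hom_{\mathcal A}^*\big(\mathcal X,l^2(\{\mathcal X_i\})\big)$ exists, and the identity $\langle T_\Gamma\xi,\{g_i\}\rangle=\big\langle\xi,\sum_i\Gamma_i^*g_i\big\rangle=\sum_i\langle\Gamma_i\xi,g_i\rangle$, valid for all $\{g_i\}\in l^2(\{\mathcal X_i\})$, forces $T_\Gamma\xi=\{\Gamma_i\xi\}_{i\in I}$ (in particular $\{\Gamma_i\xi\}_{i\in I}\in l^2(\{\mathcal X_i\})$). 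Hence $\sum_{i\in I}\langle\Gamma_i\xi,\Gamma_i\xi\rangle=\langle T_\Gamma\xi,T_\Gamma\xi\rangle=\langle (T_\Gamma^*T_\Gamma)\xi,\xi\rangle$ for every $\xi\in\mathcal X$. Now $T_\Gamma^*T_\Gamma$ is a positive element of the locally $C^*$-algebra $Hom_{\mathcal A}^*(\mathcal X)$ with $\|T_\Gamma^*T_\Gamma\|_\infty=\|T_\Gamma\|_\infty^2=\|T_\Gamma^*\|_\infty^2\le D$, so $D\,\mathrm{id}_{\mathcal X}-T_\Gamma^*T_\Gamma$ is positive, say equal to $W^*W$; therefore $D\langle\xi,\xi\rangle-\sum_{i\in I}\langle\Gamma_i\xi,\Gamma_i\xi\rangle=\langle W\xi,W\xi\rangle\ge0$, which is exactly the $g$-Bessel inequality with bound $D$.

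The formal bookkeeping aside, the substantive points are topological. One must make sure the Cauchy--Schwarz inequality and the monotonicity of each $p_\alpha$ are available in the pro-$C^*$ framework, and that the finite-partial-sum net for $\sum_i\Gamma_i^*g_i$ is genuinely Cauchy for every seminorm, so that completeness applies over an infinite index set. The one step that is not purely formal is the last one in the converse: a uniform bound on the seminorms of $T_\Gamma^*$ does \emph{not} by itself give the operator inequality $T_\Gamma^*T_\Gamma\le D\,\mathrm{id}_{\mathcal X}$; this must be extracted from positivity together with the continuous functional calculus in $Hom_{\mathcal A}^*(\mathcal X)$ (the same phenomenon underlying the upper estimate in Proposition \ref{Prop2.4}), and that is where I would expect to spend the most care.
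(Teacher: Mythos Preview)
Your proposal is correct and follows exactly the approach the paper indicates: the paper's proof consists of the single line ``The proof of this proposition proceeds analogously to that of Proposition~\ref{propo2.12},'' and your plan is precisely to carry out that analogy, which you do in detail and without error. Your closing caveat about extracting the operator inequality $T_\Gamma^*T_\Gamma\le D\,\mathrm{id}_{\mathcal X}$ from the uniform seminorm bound is a fair point of care, but it is handled by the same mechanism underlying the upper estimate in Proposition~\ref{Prop2.4}, as you note.
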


\begin{proof}
The proof of this proposition proceeds analogously to that of Proposition \ref{propo2.12}	
\end{proof}

\begin{lemma}\cite{Joita}\label{lem2.15}
	Given Hilbert pro-$C^*$-modules $\mathcal{X}_1$ and $\mathcal{X}_2$, and $P \in Hom_{\mathcal{A}}^*\left(\mathcal{X}_1, \mathcal{X}_2\right)$:
	\begin{enumerate}
		\item[(i)] $Ran(P)^{\perp}=Ker\left(T^*\right)$ and $\overline{Ran(P)} \subseteq Ran(P)^{\perp \perp}=Ker\left(P^*\right)^{\perp}$.
		\item[(ii)] $Ran(P)$ is closed iff $Ran\left(P^*\right)$ is closed; if true, $Ran(P)=Ker\left(P^*\right)^{\perp}$ and $Ran\left(P^*\right)=Ker(P)^{\perp}$.
	\end{enumerate}
\end{lemma}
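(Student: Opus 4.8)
The plan is to dispatch part (i) quickly from adjointness plus non-degeneracy of the inner product, and to put the real work into the equivalence in part (ii). For (i), note that for $\eta\in\mathcal{X}_2$ the condition $\eta\perp Ran(P)$ means $\langle P\xi,\eta\rangle=0$ for every $\xi\in\mathcal{X}_1$, hence (by adjointness) $\langle\xi,P^*\eta\rangle=0$ for every $\xi$, hence $P^*\eta=0$; so $Ran(P)^{\perp}=Ker(P^*)$ (reading $Ker(P^*)$ for the $Ker(T^*)$ in the statement). Applying this twice gives $Ran(P)^{\perp\perp}=Ker(P^*)^{\perp}$, and since an orthogonal complement is always a closed submodule containing $Ran(P)$, we get $\overline{Ran(P)}\subseteq Ran(P)^{\perp\perp}$.

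For part (ii) I would first prove the implication ``$Ran(P)$ closed $\Rightarrow$ $Ran(P^*)$ closed, together with the two range identities,'' and then obtain the converse by applying that same implication to $P^*$, using $(P^*)^*=P$. So assume $\mathcal{Y}:=Ran(P)$ is closed. Then $\mathcal{Y}$ is a Hilbert $\mathcal{A}$-module for the restricted inner product, and $P$ regarded as a map $P'\colon\mathcal{X}_1\to\mathcal{Y}$ is bounded, adjointable with $(P')^*=P^*|_{\mathcal{Y}}$, and onto. Proposition \ref{prop2.2} applied to $P'$ then produces a constant $m'>0$ with $m'\langle\eta,\eta\rangle\leq\langle P^*\eta,P^*\eta\rangle=\langle PP^*\eta,\eta\rangle$ for all $\eta\in\mathcal{Y}$, so the positive operator $B:=PP^*|_{\mathcal{Y}}\in Hom_{\mathcal{A}}^*(\mathcal{Y})$ satisfies $B\geq m' I_{\mathcal{Y}}$ and is therefore invertible in $Hom_{\mathcal{A}}^*(\mathcal{Y})$.

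Next I would use the invertibility of $B$ to build the orthogonal decomposition. For arbitrary $\eta\in\mathcal{X}_2$ one has $PP^*\eta\in Ran(P)=\mathcal{Y}=Ran(B)$, so $PP^*\eta=B\zeta=PP^*\zeta$ for a unique $\zeta\in\mathcal{Y}$; then $PP^*(\eta-\zeta)=0$, and computing $\langle P^*(\eta-\zeta),P^*(\eta-\zeta)\rangle=\langle PP^*(\eta-\zeta),\eta-\zeta\rangle=0$ shows $\eta-\zeta\in Ker(P^*)=\mathcal{Y}^{\perp}$ by (i). Hence $\mathcal{X}_2=\mathcal{Y}\oplus Ker(P^*)$, and a one-line orthogonality argument forces $\mathcal{Y}=\mathcal{Y}^{\perp\perp}=Ker(P^*)^{\perp}$, i.e. $Ran(P)=Ker(P^*)^{\perp}$. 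Since $P^*$ annihilates $Ker(P^*)$ and is bounded below on $\mathcal{Y}$, this decomposition gives $Ran(P^*)=Ran(P^*|_{\mathcal{Y}})$, which is closed because a bounded-below adjointable operator has closed range; and rerunning the already-proved implication with $P^*$ in the role of $P$ (now legitimate, as $Ran(P^*)$ is known closed) yields $Ran(P^*)=Ker(P)^{\perp}$. The converse of the equivalence is then just this implication applied to $P^*$.

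The step I expect to be the main obstacle is the invertibility of $B=PP^*|_{\mathcal{Y}}$ from the estimate $B\geq m' I$: over a pro-$C^*$-algebra the operator module is only a locally $C^*$-algebra, so one must first check that $B$ is \emph{uniformly} bounded (which follows from boundedness of $P$) and then invoke the pro-$C^*$ analogue of ``a positive, uniformly bounded, bounded-below element is invertible,'' in the spirit of Proposition \ref{Prop2.4}. Everything else — the inner-product identities, closedness of orthogonal complements, and ``bounded below $\Rightarrow$ closed range'' — is routine.
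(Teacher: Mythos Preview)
The paper does not supply a proof of this lemma at all: it is stated with the citation \cite{Joita} and used later (in the proof of Theorem~\ref{thrm3.3}) as a black box, so there is no ``paper's own proof'' to compare against. Your outline is essentially the standard argument one finds in the Hilbert-module literature and is sound in its broad strokes.

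One technical point worth flagging: you invoke Proposition~\ref{prop2.2} for the surjection $P'\colon\mathcal{X}_1\to\mathcal{Y}$, but that proposition is stated in the paper only for \emph{bounded} adjointable operators, whereas Lemma~\ref{lem2.15} is asserted for arbitrary $P\in Hom_{\mathcal{A}}^*(\mathcal{X}_1,\mathcal{X}_2)$. In the pro-$C^*$ setting adjointable need not imply uniformly bounded, so either a boundedness hypothesis on $P$ is tacitly assumed (which is indeed how the lemma is applied later in the paper, to the operator $\Phi$ constructed in Theorem~\ref{thrm3.3}), or one must appeal to the more general version in Joi\c{t}\u{a}'s original paper. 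The same remark applies to your invertibility step for $B=PP^*|_{\mathcal{Y}}$, which you already correctly identify as the place where uniform (not merely seminorm-wise) control is needed. With that caveat, the argument is fine.
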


\section{Main result}
\subsection{$K$-Riesz bases  in Hilbert Pro-$C^*$-Modules}
\begin{definition}

	Suppose $K$ belongs to $Hom_{\mathcal{A}}^*(\mathcal{X})$. A sequence, denoted as $\{ \xi_i \}_{i \in I}$ within the space $\mathcal{X}$, is recognized as a $K$-Riesz basis under the following conditions:
	\begin{enumerate}
		\item The group of elements given by  $\left\{\xi \in \mathcal{X}:\left\langle \xi, \xi_i\right\rangle=0\right.$, for all $\left.i \in I\right\} $
		is included in the kernel of the adjoint operator $K^*$.
		
		\item There are specific positive constants, noted as $C$ and $D$, such that for any sequence $\left\{\alpha_i\right\}_{i \in I}$ in $l^2(\mathcal{A})$, the following holds:
		$$
	C \sum_{i \in I} \alpha_i \alpha_i^* \leq\left\langle\sum_{i \in I} \alpha_i \xi_i, \sum_{i \in I} \alpha_i \xi_i\right\rangle \leq D \sum_{i \in I} \alpha_i \alpha_i^* .
	$$
	\end{enumerate}

\end{definition}
	
	\begin{theorem}
		Let's consider a sequence $\left\{\xi_i\right\}_{i \in I}$ that serves as a $K$-frame for $\mathcal{X}$. This sequence $\left\{\xi_i\right\}_{i \in I}$ is recognized as a $K$-Riesz basis for $\mathcal{X}$ if, and only if, the synthesis operator $T^*$ associated with this $K$-frame is lower bounded.
	\end{theorem}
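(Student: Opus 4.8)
The plan is to observe that the hypothesis ``$\{\xi_i\}_{i\in I}$ is a $K$-frame'' already forces two of the three ingredients in the definition of a $K$-Riesz basis, so that the theorem collapses to unwinding the one surviving ingredient and matching it against Proposition \ref{prop2.2}.

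First I would record two facts that hold automatically for any $K$-frame $\{\xi_i\}_{i\in I}$ with bounds $A,B$. (i) A $K$-frame is in particular a Bessel sequence with bound $B$, so Proposition \ref{propo2.12} gives that the synthesis operator $T^*$ of \eqref{eq2.5} is well-defined and bounded with $\bar p_{\mathcal X}(T^*)\le\sqrt B$; combining this with the uniform-boundedness estimate underlying Proposition \ref{Prop2.4} yields the $\mathcal A$-valued inequality $\langle T^*\alpha,T^*\alpha\rangle\le B\sum_{i\in I}\alpha_i\alpha_i^*$ for every $\alpha=\{\alpha_i\}_{i\in I}\in l^2(\mathcal A)$, i.e. the upper estimate in condition (2) of the definition holds with $D=B$. (ii) If $\langle\xi,\xi_i\rangle=0$ for all $i$, the lower $K$-frame inequality reads $A\langle K^*\xi,K^*\xi\rangle\le 0$; since $A>0$ and the inner product is positive and definite, this forces $K^*\xi=0$, so $\{\xi\in\mathcal X:\langle\xi,\xi_i\rangle=0,\ \forall i\}\subseteq\ker K^*$, which is condition (1). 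Hence a $K$-frame $\{\xi_i\}_{i\in I}$ is a $K$-Riesz basis if and only if there is a constant $C>0$ with $C\sum_{i\in I}\alpha_i\alpha_i^*\le\langle\sum_{i\in I}\alpha_i\xi_i,\sum_{i\in I}\alpha_i\xi_i\rangle$ for all $\alpha\in l^2(\mathcal A)$.

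Next I would identify this surviving inequality with lower-boundedness of $T^*$. Since $T^*\alpha=\sum_{i\in I}\alpha_i\xi_i$ and $\langle\alpha,\alpha\rangle=\sum_{i\in I}\alpha_i\alpha_i^*$ in $l^2(\mathcal A)$, the inequality above is precisely $C\langle\alpha,\alpha\rangle\le\langle T^*\alpha,T^*\alpha\rangle$, which is exactly condition (iii) of Proposition \ref{prop2.2} applied to $T^*$; by the equivalence (ii)$\Leftrightarrow$(iii) there this is the same as saying $T^*$ is bounded below in the seminorm sense. This settles both implications simultaneously: for the forward direction, reading off the constant $C$ from condition (2) of the $K$-Riesz basis definition shows $T^*$ is lower bounded; for the converse, lower-boundedness of $T^*$ supplies the constant $C$ through Proposition \ref{prop2.2}(iii), and together with facts (i) and (ii) this verifies every requirement in the definition of a $K$-Riesz basis.

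The only step that is more than bookkeeping is fact (i): upgrading the seminorm bound $\bar p_{\mathcal X}(T^*\alpha)\le\sqrt B\,\bar p(\alpha)$ to the module operator inequality $\langle T^*\alpha,T^*\alpha\rangle\le B\langle\alpha,\alpha\rangle$. This needs the standard Hilbert pro-$C^*$-module estimate $\langle Rx,Rx\rangle\le\|R\|_\infty^2\langle x,x\rangle$ for a uniformly bounded adjointable operator $R$ (the same mechanism as Proposition \ref{Prop2.4}), so I would make explicit that $T^*$ is uniformly bounded here; the rest is a direct translation between the defining inequalities and Proposition \ref{prop2.2}.
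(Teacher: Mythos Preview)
Your proposal is correct and follows essentially the same route as the paper: both arguments use the lower $K$-frame inequality to obtain condition (1), use the Bessel bound via Proposition \ref{propo2.12} for the upper half of condition (2), and then identify the remaining lower half of condition (2) with the inner-product lower bound $C\langle\alpha,\alpha\rangle\le\langle T^*\alpha,T^*\alpha\rangle$. The only differences are organizational: you package both implications at once after the reduction, and you are more explicit than the paper about two points it passes over silently---invoking Proposition \ref{prop2.2} to pass between the seminorm and inner-product formulations of ``bounded below'', and noting that the upgrade from $\bar p_{\mathcal X}(T^*)\le\sqrt B$ to $\langle T^*\alpha,T^*\alpha\rangle\le B\langle\alpha,\alpha\rangle$ requires the operator-inequality form of boundedness as in Proposition \ref{Prop2.4}.
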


	 \begin{proof}
	 	$\quad(\Rightarrow)$ Let's begin by hypothesizing that $\left\{\xi_i\right\}_{i \in I}$ represents a $K$-Riesz basis for $\mathcal{X}$. This implies:
	 	\begin{align*}
	 	\left\{\xi \in \mathcal{X}:\left\langle \xi, \xi_i\right\rangle=0, \forall i \in I\right\} &\subset Ker\left(K^*\right), \\
	 	C \sum_{i \in I} \alpha_i \alpha_i^* &\leq\left\langle\sum_{i \in I} \alpha_i \xi_i, \sum_{i \in I} \alpha_i \xi_i\right\rangle \leq D \sum_{i \in I} \alpha_i \alpha_i^*.
	 	\end{align*}
	 	From this, it follows that:
	 	\[
	 	C\left\langle\left\{\alpha_i\right\}_{i \in I},\left\{\alpha_i\right\}_{i \in I}\right\rangle \leq\left\langle T^*\left(\left\{\alpha_i\right\}_{i \in I}\right), T^*\left(\left\{\alpha_i\right\}_{i \in I}\right)\right\rangle \leq D\left\langle\left\{\alpha_i\right\}_{i \in I},\left\{\alpha_i\right\}_{i \in I}\right\rangle.
	 	\]
	 	Consequently, $T^*$ is bounded below.
	 	
	 	$\quad(\Leftarrow)$ Given that $\left\{\xi_i\right\}_{i \in I}$ acts as a $K$-frame, we can identify constants $C, D>0$ such that:
	 	\[
	 	C\left\langle K^* \xi, K^* \xi \right\rangle \leq \sum_{i \in I}\left\langle \xi, \xi_i\right\rangle\left\langle \xi_i, \xi\right\rangle \leq D \langle \xi, \xi\rangle.
	 	\]
	 	If $\left\langle \xi, \xi_i\right\rangle=0$ for all $i \in I$, then we deduce $C\left\langle K^*\xi, K^* \xi\right\rangle=0$ and consequently $\xi \in Ker\left(K^*\right)$. Therefore:
	 	\[
	 	\left\{\xi \in \mathcal{X}:\left\langle \xi, \xi_i\right\rangle=0\right., \forall i \in I\left.\right\} \subset Ker\left(K^*\right).
	 	\]
	 	Based on the fact that $\left\{\xi_i\right\}_{i \in I}$ is a Bessel sequence and referring to Proposition \ref{propo2.12}, we deduce:
	 	\[
	 	\bar{p}_\mathcal{X}(T^*) \leq \sqrt{D}.
	 	\]
	 	Thus:
	 	\begin{align*}
	 	\left\langle\sum_{i \in I} \alpha_i \xi_i, \sum_{i \in I} \alpha_i \xi_i\right\rangle &\leq D \sum_{i \in I} \alpha_i \alpha_i^*.
	 	\end{align*}
	 	Since $T^*$ is lower bounded, an existence of a positive constant $m$ implies:
	 	\begin{align*}
	 	m \sum_{i \in I} \alpha_i \alpha_i^* &\leq\left\langle\sum_{i \in I} \alpha_i \xi_i, \sum_{i \in I} \alpha_i \xi_i\right\rangle.
	 	\end{align*}
	 	Thus, we conclude:
	 	\[
	 	m \sum_{i \in I} \alpha_i \alpha_i^* \leq\left\langle\sum_{i \in I} \alpha_i \xi_i, \sum_{i \in I} \alpha_i \xi_i\right\rangle \leq D \sum_{i \in I} \alpha_i \alpha_i^*.
	 	\]
	 	From this, $\left\{\xi_i\right\}_{i \in I}$ qualifies as a $K$-Riesz basis for $\mathcal{X}$.
	 \end{proof}

\begin{theorem}\label{thrm3.3}

		Consider the sequence $\left\{\xi_i\right\}_{i \in I}$. It qualifies as a $K$-Riesz basis for $\mathcal{X}$ if and only if there's an operator $\Phi$, that's bounded from below and belongs to $Hom_{\mathcal{A}}^*(\mathcal{X})$, meeting the criteria: $Ran(K)$ is contained within $Ran(\Phi)$, and for each $i \in I$, $\Phi u_i =\xi_i$. Here, $\left\{u_i\right\}_{i \in I}$ is identified as the standard orthonormal basis for $\mathcal{X}$.

\end{theorem}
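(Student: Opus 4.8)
The plan is to produce the operator explicitly and recognise it as a disguised synthesis operator. With $\{u_i\}_{i\in I}$ the standard orthonormal basis of $\mathcal{X}$, set $\Phi\xi=\sum_{i\in I}\langle\xi,u_i\rangle\xi_i$ for $\xi\in\mathcal{X}$; this is nothing but $\Phi=T^{*}U$, where $T^{*}:l^{2}(\mathcal{A})\to\mathcal{X}$, $T^{*}(\{\alpha_i\})=\sum_i\alpha_i\xi_i$ is the synthesis operator and $U\in Hom_{\mathcal{A}}^{*}(\mathcal{X},l^{2}(\mathcal{A}))$, $U\xi=\{\langle\xi,u_i\rangle\}_i$, is the canonical unitary attached to $\{u_i\}$. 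By construction $\Phi u_j=\xi_j$ for all $j$, and since $U$ is onto, $Ran(\Phi)=Ran(T^{*})$. The tools I would use throughout are Proposition~\ref{propo2.12} (identifying ``Bessel with bound $D$'' with ``$T^{*}$ well-defined and bounded by $\sqrt D$''), Lemma~\ref{lem2.15} (the range/kernel orthogonality identities), and Proposition~\ref{Prop2.4} (turning operator boundedness into an $\mathcal{A}$-valued inequality).

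For $(\Rightarrow)$, assume $\{\xi_i\}_{i\in I}$ is a $K$-Riesz basis. The upper estimate of its second defining condition together with Proposition~\ref{propo2.12} shows $T^{*}$ is well-defined and bounded, and, since $\langle\{\alpha_i\},\{\alpha_i\}\rangle_{l^{2}(\mathcal{A})}=\sum_i\alpha_i\alpha_i^{*}$, the lower estimate says precisely that $T^{*}$ is bounded below; hence $\Phi=T^{*}U$ is adjointable and bounded below (the lower bound transfers because $U$ is unitary). It remains to show $Ran(K)\subseteq Ran(\Phi)=Ran(T^{*})$. A bounded-below adjointable operator has closed range (completeness of $\mathcal{X}$ in the seminorm topology together with the inequalities $m\,\bar p(\cdot)\le\bar p(T^{*}\cdot)$), so Lemma~\ref{lem2.15}(ii) yields $Ran(T^{*})=Ker\big((T^{*})^{*}\big)^{\perp}=Ker(T)^{\perp}$, where $T$ is the analysis operator. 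Now $Ker(T)=\{\xi\in\mathcal{X}:\langle\xi,\xi_i\rangle=0\ \forall i\}\subseteq Ker(K^{*})$ by the first defining condition; taking orthogonal complements reverses the inclusion, and Lemma~\ref{lem2.15}(i) gives $\overline{Ran(K)}\subseteq Ker(K^{*})^{\perp}$, so $Ran(K)\subseteq Ker(K^{*})^{\perp}\subseteq Ker(T)^{\perp}=Ran(\Phi)$, as required.

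For $(\Leftarrow)$, suppose such a $\Phi$ exists. Using $\langle u_i,u_j\rangle=\delta_{ij}$ and $\zeta=\sum_i\langle\zeta,u_i\rangle u_i$ one computes, for every $\xi\in\mathcal{X}$, $\sum_i\langle\xi,\xi_i\rangle\langle\xi_i,\xi\rangle=\sum_i\langle\Phi^{*}\xi,u_i\rangle\langle u_i,\Phi^{*}\xi\rangle=\langle\Phi^{*}\xi,\Phi^{*}\xi\rangle$. Consequently, if $\langle\xi,\xi_i\rangle=0$ for all $i$ then $\Phi^{*}\xi=0$, i.e.\ $\xi\in Ker(\Phi^{*})=Ran(\Phi)^{\perp}\subseteq Ran(K)^{\perp}=Ker(K^{*})$ (the middle inclusion because $Ran(K)\subseteq Ran(\Phi)$, the outer equalities by Lemma~\ref{lem2.15}(i)); this is the first defining condition. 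For the second, given $\{\alpha_i\}\in l^{2}(\mathcal{A})$ put $\xi=\sum_i\alpha_i u_i\in\mathcal{X}$, so that $\langle\xi,\xi\rangle=\sum_i\alpha_i\alpha_i^{*}$ and $\sum_i\alpha_i\xi_i=\Phi\xi$; then $\Phi$ bounded below gives $m\sum_i\alpha_i\alpha_i^{*}\le\langle\sum_i\alpha_i\xi_i,\sum_i\alpha_i\xi_i\rangle$, while boundedness of $\Phi$ (Proposition~\ref{Prop2.4}) gives $\langle\sum_i\alpha_i\xi_i,\sum_i\alpha_i\xi_i\rangle\le\|\Phi\|_{\infty}^{2}\sum_i\alpha_i\alpha_i^{*}$, so the second condition holds with $C=m$ and $D=\|\Phi\|_{\infty}^{2}$.

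I expect the two genuinely delicate points to be the following. In $(\Rightarrow)$, one must justify that the bounded-below adjointable operator $T^{*}$ has \emph{closed} range in the pro-$C^{*}$ framework, since this is exactly what upgrades $\overline{Ran(T^{*})}\subseteq Ker(T)^{\perp}$ to an equality and lets Lemma~\ref{lem2.15}(ii) do its job. In $(\Leftarrow)$, one must make sure $\Phi$ is uniformly bounded (bounded in the $\|\cdot\|_{\infty}$ sense) so that Proposition~\ref{Prop2.4} genuinely applies and yields the $\mathcal{A}$-valued upper estimate rather than a mere seminorm bound. Everything else is routine bookkeeping with the orthonormal-basis identities and the definitions of the analysis and synthesis operators.
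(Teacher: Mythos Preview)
Your proposal is correct and follows essentially the same route as the paper: both construct $\Phi(\eta)=\sum_i\langle\eta,u_i\rangle\xi_i$, derive the two-sided $\mathcal{A}$-valued bounds on $\Phi$ directly from the $K$-Riesz inequalities, and pass between the kernel inclusion $Ker(\Phi^{*})\subseteq Ker(K^{*})$ and the range inclusion $Ran(K)\subseteq Ran(\Phi)$ via Lemma~\ref{lem2.15}, using that $\Phi$ bounded below forces $Ran(\Phi)$ closed. Your explicit factorisation $\Phi=T^{*}U$ is a tidy bookkeeping device that the paper leaves implicit, and your flagged caveats (closed range in the forward direction, and that Proposition~\ref{Prop2.4} as stated assumes invertibility whereas only the upper half $\langle\Phi\xi,\Phi\xi\rangle\le\|\Phi\|_{\infty}^{2}\langle\xi,\xi\rangle$ is needed) are exactly the points the paper glides over without comment.
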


\begin{proof}
	
	Assume that the sequence $\left\{\xi_i\right\}_{i \in I}$ forms a $K$-Riesz basis for $\mathcal{X}$. From this assumption, we have:
	\begin{equation}\label{eq3.1}
	\left\{\xi \in \mathcal{X} : \left\langle \xi, \xi_i \right\rangle = 0 \text{, for all } i \in I\right\} \subset Ker\left(K^*\right).
	\end{equation}
	There also exist constants $C, D > 0$ such that for every $\left\{\alpha_i\right\}_{i \in I} \in l^2(\mathcal{A})$:
	\begin{equation}\label{eq3.2}
	C \sum_{i \in I} \alpha_i \alpha_i^* \leq \left\langle \sum_{i \in I} \alpha_i \xi_i, \sum_{i \in I} \alpha_i \xi_i \right\rangle \leq D \sum_{i \in I} \alpha_i \alpha_i^*.
	\end{equation}
	Now, define the operator $\Phi: \mathcal{X} \rightarrow \mathcal{X}$ as:
	$$
	\Phi(\eta) = \sum_{i \in I} \left\langle \eta, u_i \right\rangle \xi_i.
	$$
	For every $\eta \in \mathcal{X}$, we know that $\{ \langle \eta, u_i\rangle \}_{i \in I} \in l^2(\mathcal{A})$. Utilizing (\ref{eq3.2}), we deduce:
	$$
	\begin{aligned}
	\langle \Phi(\eta), \Phi(\eta) \rangle & = \left\langle \sum_{i \in I} \left\langle \eta, u_i \right\rangle \xi_i, \sum_{i \in I} \left\langle \eta, u_i \right\rangle \xi_i \right\rangle \\
	& \leq D \sum_{i \in I} \left\langle \eta, u_i \right\rangle \left\langle u_i, \eta \right\rangle \\
	& = D \langle \eta, \eta \rangle.
	\end{aligned}
	$$
	This confirms that $\Phi$ is a bounded operator and that $\Phi u_i = \xi_i$ for all $i \in I$. Additionally, from \ref{eq3.2}, we can infer:
	\begin{equation}\label{eq3.3}
	C\langle \eta, \eta \rangle \leq \langle \Phi(\eta), \Phi(\eta) \rangle \leq D \langle \eta, \eta \rangle, \quad \forall \eta \in \mathcal{X}.
	\end{equation}
	Equation (\ref{eq3.3}) implies that $\Phi$ is bounded below. 
	
	Continuing, consider the adjoint:
	$$
	\Phi^*: \mathcal{X} \rightarrow \mathcal{X}, \quad \Phi^*(\xi) = \sum_{i \in I} \left\langle \xi, \xi_i \right\rangle u_i.
	$$
	If $\xi \in \mathcal{X}$ and $\Phi^*(\xi) = 0$, then for every $i \in I$, $\left\langle \xi, \xi_i \right\rangle = 0$. Hence, according to (\ref{eq3.1}), $\xi \in Ker(K^*)$. This implies that $Ker(\Phi^*) \subset Ker(K^*)$, leading to the conclusion $Ker(K^*)^\perp \subset Ker(\Phi^*)^\perp$. Utilizing Lemma \ref{lem2.15}, it follows that $Ran(K) \subset \overline{Ran(K)} \subset \overline{Ran(\Phi)} = Ran(\Phi)$.
	
	For the converse, let $\Phi \in Hom_{\mathcal{A}}^*(\mathcal{X})$ be a bounded below operator satisfying $\Phi u_i = \xi_i$ for all $i \in I$ and $Ran(K) \subset Ran(\Phi)$. Then, for every sequence $\left\{\alpha_i\right\}_{i \in I} \in l^2(\mathcal{A})$, we deduce:
	$$
	\begin{aligned}
	\left\langle \sum_{i \in I} \alpha_i \xi_i, \sum_{i \in I} \alpha_i \xi_i \right\rangle & = \left\langle \sum_{i \in I} \alpha_i \Phi u_i, \sum_{i \in I} \alpha_i \Phi u_i \right\rangle \\
	& \leq \|\Phi\|_{\infty}^2 \sum_{i \in I} \alpha_i \alpha_i^*.
	\end{aligned}
	$$
	Given a positive constant $m$, we further have:
	$$
	\begin{aligned}
	m \sum_{i \in I} \alpha_i \alpha_i^* & = m \left\langle \sum_{i \in I} \alpha_i u_i, \sum_{i \in I} \alpha_i u_i \right\rangle \\
	& \leq \left\langle \Phi \left( \sum_{i \in I} \alpha_i u_i \right), \Phi \left( \sum_{i \in I} \alpha_i u_i \right) \right\rangle \\
	& = \left\langle \sum_{i \in I} \alpha_i \xi_i, \sum_{i \in I} \alpha_i \xi_i \right\rangle.
	\end{aligned}
	$$
	This demonstrates that (\ref{eq3.2}) holds for the sequence $\left\{\xi_i\right\}_{i \in I}$ with the constants $m$ and $\|\Phi\|_{\infty}^2$. Furthermore, for any $\xi \in \mathcal{X}$ such that $\left\langle \xi, \xi_i \right\rangle = 0$ for all $i \in I$, we conclude:
	$$
	0 = \left\langle \xi, \Phi u_i \right\rangle = \left\langle \Phi^* \xi, u_i \right\rangle,
	$$
	leading to the result $\Phi^* \xi = 0$. As $Ran(K) \subset Ran(\Phi)$ and by employing Lemma \ref{lem2.15}, we can infer $Ker(\Phi^*) \subset Ker(K^*)$. This confirms the relationship:
	$$
	\left\{\xi \in \mathcal{X} : \left\langle \xi, \xi_i \right\rangle = 0 \text{, for all } i \in I\right\} \subset Ker\left(K^*\right).
	$$
	With this result and the earlier derived bounds, we can conclude that the sequence $\left\{\xi_i\right\}_{i \in I}$ forms a $K$-Riesz basis for $\mathcal{X}$.
	
\end{proof}

\subsection{ $K$-$g$-Riesz bases in Hilbert Pro-$C^*$-Modules}

\begin{definition}
	Let \( K \) be an operator within \( Hom_{\mathcal{A}}^*(\mathcal{X}) \). We say that the sequence 
	\[
	\left\{ \Gamma_i \mid \Gamma_i \in Hom_{\mathcal{A}}^*\left(\mathcal{X}, \mathcal{X}_i\right) \right\}_{i \in I}
	\]
	in \( \mathcal{X} \) acts as a \( K \)-\( g \)-Riesz basis if and only if:
	
	\begin{itemize}
		\item[(a)]$\left\{\xi \in \mathcal{X}: \Gamma_i \xi=0\right.$, for all $\left.i \in I\right\} \subset Ker\left(K^*\right)$.
		\item[(b)] There exist positive constants \( C \) and \( D \) such that, for any sequence \( \{g_i\}_{i \in I} \) in \( l^2\left(\{\mathcal{X}_i\}_{i \in I}\right) \), the following inequality holds:
		\[
		C \sum_{i \in I}\left\langle g_i, g_i\right\rangle \leq \left\langle\sum_{i \in I} \Gamma_i^* g_i, \sum_{i \in I} \Gamma_i^* g_i\right\rangle \leq D \sum_{i \in I}\left\langle g_i, g_i\right\rangle.
		\]
	\end{itemize}
\end{definition}

\begin{example}
	Consider the standard orthonormal set \( \{u_i\}_{i=0}^{\infty} \) within the space \( \mathcal{X} \). For each \( i \) in the natural numbers, introduce the operator \( \Gamma_i: \mathcal{X} \to \mathcal{A} \) such that \( \Gamma_i \xi = \langle \xi, u_i \rangle \). Simultaneously, describe the operator \( K: \mathcal{X} \to \mathcal{X} \) as
	\[
	K \xi = \sum_{i=1}^{\infty} \langle \xi, u_i \rangle u_i.
	\]
	It's evident from the above that \( K^* \xi = \sum_{i=1}^{\infty} \langle \xi, u_i \rangle u_i \) and for any \( \alpha \) in \( \mathcal{A} \), \( \Gamma_i^* \alpha = \alpha u_i \). For any given \( \xi \) in \( \mathcal{X} \), the relationship holds as:
	\[
	\langle K^* \xi, K^* \xi \rangle = \sum_{i=1}^{\infty} \langle \xi, u_i \rangle \langle u_i, \xi \rangle = \sum_{i=1}^{\infty} \langle \Gamma_i \xi, \Gamma_i \xi \rangle.
	\]
	Should \( \Gamma_i \xi = 0 \) for all \( i \) in the natural numbers, it follows that \( K^* \xi = 0 \). This leads to the conclusion that any \( \xi \) in \( \mathcal{X} \) for which \( \Gamma_i \xi = 0 \) resides within the kernel of \( K^* \). Moreover, for every sequence \( \{\alpha_i\}_{i \in \mathbf{N}} \) in \( l^2(\mathcal{A}) \), the equation stands as:
	\[
	\sum_{i \in \mathbf{N}} \langle \alpha_i, \alpha_i \rangle = \sum_{i \in \mathbf{N}} \alpha_i \alpha_i^*.
	\]
	As a result, \( \{\Gamma_i\}_{i \in \mathbf{N}} \) serves as a \( K \)-g-Riesz basis in \( \mathcal{X} \) relative to \( \mathcal{A} \).
\end{example}

\begin{theorem}\label{thrm3.6}
	Suppose \( \{\Gamma_i \in Hom_{\mathcal{A}}^*(\mathcal{X}, \mathcal{X}_i) : i \in I\} \) constitutes a \( K \)-g-frame in \( \mathcal{X} \) related to the set \( \{\mathcal{X}_i\}_{i \in I} \). Then, \( \{\Gamma_i\}_{i \in I} \) establishes a \( K \)-g-Riesz basis within \( \mathcal{X} \) in association with \( \{\mathcal{X}_i\}_{i \in I} \) if and only if, the synthesis operator \( T_{\Gamma}^* \) associated with the \( K \)-g-frame \( \{\Gamma_i\}_{i \in I} \) possesses a bounded lower limit.
\end{theorem}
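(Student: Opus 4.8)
The plan is to mirror the structure of the analogous non-$g$ result (the $K$-Riesz basis characterization via lower-bounded synthesis operator) and the proof of Theorem~\ref{thrm2.10}, translating statements about $l^2(\mathcal{A})$-coefficient sequences into statements about $l^2(\{\mathcal{X}_i\}_{i\in I})$-coefficient sequences and replacing $\alpha_i\xi_i$ by $\Gamma_i^*g_i$. The key identification is that for $g=\{g_i\}_{i\in I}\in l^2(\{\mathcal{X}_i\}_{i\in I})$ one has $T_\Gamma^*g=\sum_{i\in I}\Gamma_i^*g_i$ and $\langle g,g\rangle=\sum_{i\in I}\langle g_i,g_i\rangle$, so condition (b) in the definition of a $K$-$g$-Riesz basis reads verbatim
\[
C\langle g,g\rangle \leq \langle T_\Gamma^* g, T_\Gamma^* g\rangle \leq D\langle g,g\rangle \quad\text{for all } g\in l^2(\{\mathcal{X}_i\}_{i\in I}).
\]

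For the forward direction, I would assume $\{\Gamma_i\}_{i\in I}$ is a $K$-$g$-Riesz basis. The upper inequality in (b) already shows $T_\Gamma^*$ is bounded (indeed it is the synthesis operator of a $g$-Bessel sequence, consistent with Proposition~\ref{propo2.14}), and the lower inequality $C\langle g,g\rangle\leq\langle T_\Gamma^* g,T_\Gamma^* g\rangle$ is precisely the statement that $T_\Gamma^*$ is bounded below (in the inner-product sense, hence also in seminorm). So this direction is essentially a rewriting; condition (a) is not even needed here.

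For the converse, I would assume $T_\Gamma^*$ is bounded below, say $m\langle g,g\rangle\leq\langle T_\Gamma^* g,T_\Gamma^* g\rangle$ for all $g\in l^2(\{\mathcal{X}_i\}_{i\in I})$, and that $\{\Gamma_i\}_{i\in I}$ is already a $K$-$g$-frame with bounds $C_0,D_0$. The $g$-frame (hence $g$-Bessel) property together with Proposition~\ref{propo2.14} gives $\bar p_{\mathcal{X}}(T_\Gamma^*)\leq\sqrt{D_0}$, which supplies the upper bound $\langle\sum_i\Gamma_i^*g_i,\sum_i\Gamma_i^*g_i\rangle\leq D_0\sum_i\langle g_i,g_i\rangle$; combined with the lower bound from $m$ this establishes (b) with constants $m$ and $D_0$. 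For (a): if $\xi\in\mathcal{X}$ satisfies $\Gamma_i\xi=0$ for all $i$, then $\sum_{i\in I}\langle\Gamma_i\xi,\Gamma_i\xi\rangle=0$, and the lower $K$-$g$-frame inequality $C_0\langle K^*\xi,K^*\xi\rangle\leq\sum_{i\in I}\langle\Gamma_i\xi,\Gamma_i\xi\rangle=0$ forces $\langle K^*\xi,K^*\xi\rangle=0$, i.e. $\xi\in Ker(K^*)$. Hence $\{\xi:\Gamma_i\xi=0\ \forall i\}\subseteq Ker(K^*)$, which is (a). Together (a) and (b) say $\{\Gamma_i\}_{i\in I}$ is a $K$-$g$-Riesz basis.

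I do not expect a genuine obstacle here: the only point requiring a little care is the passage between the inner-product lower bound and the seminorm lower bound for $T_\Gamma^*$ (both appear in Proposition~\ref{prop2.2}, and one should invoke it, or Proposition~\ref{Prop2.4}, to keep the notion of ``bounded lower limit'' unambiguous across the two directions), and making sure the convergence of $\sum_{i\in I}\Gamma_i^*g_i$ for arbitrary $g\in l^2(\{\mathcal{X}_i\}_{i\in I})$ is justified — but that is exactly the content of Proposition~\ref{propo2.14}, which may be assumed. Everything else is a direct transcription of the definitions.
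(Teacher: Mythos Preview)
Your proposal is correct and follows essentially the same approach as the paper: both directions are handled by identifying condition (b) of the $K$-$g$-Riesz basis definition with the inner-product lower and upper bounds on $T_\Gamma^*$, invoking Proposition~\ref{propo2.14} for the upper bound in the converse, and deducing condition (a) from the lower $K$-$g$-frame inequality applied to $\xi$ with $\Gamma_i\xi=0$. The paper's proof is exactly this transcription, with no additional ingredients.
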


\begin{proof}
	$(\Rightarrow)$, Suppose that the set \( \{\Gamma_i\}_{i \in I} \) acts as a \( K \)-g-Riesz basis for \( \mathcal{X} \) associated with \( \{\mathcal{X}_i\}_{i \in I} \). Consequently, 
	$$
	\{\xi \in \mathcal{X} | \Gamma_i \xi=0 \text{ for every } i \in I\} \subseteq Ker(K^*),
	$$
	and certain constants \( C, D > 0 \) can be found ensuring that for any \( \{g_i\}_{i \in I} \) from \( l^2(\{\mathcal{X}_i\}_{i \in I}) \),
	$$
	C \sum_{i \in I}\langle g_i, g_i\rangle \leq \langle\sum_{i \in I} \Gamma_i^* g_i, \sum_{i \in I} \Gamma_i^* g_i\rangle \leq D \sum_{i \in I}\langle g_i, g_i\rangle.
	$$
	From this, it follows that
	$$
	C\langle\{g_i\}_{i \in I},\{g_i\}_{i \in I}\rangle \leq \langle T_{\Gamma}^*\{g_i\}_{i \in I}, T_{\Gamma}^*\{g_i\}_{i \in I}\rangle \leq D\langle\{g_i\}_{i \in I},\{g_i\}_{i \in I}\rangle,
	$$
	which implies the operator \( T_{\Gamma}^* \) has a bounded minimum value.
	
	$(\Leftarrow)$, With the given \( \{\Gamma_i\}_{i \in I} \) being a \( K \)-g-Frame, constants \( C, D > 0 \) are found such that for every \( \xi \) in \( \mathcal{X} \),
	$$
	C\langle K^* \xi, K^* \xi\rangle \leq \sum_{i \in I}\langle \Gamma_i \xi, \Gamma_i \xi \rangle \leq D\langle \xi, \xi\rangle.
	$$
	Thus, if all \( \Gamma_i \xi \) equals zero for each \( i \) in \( I \), then \( C\langle K^* \xi, K^* \xi\rangle = 0 \), meaning \( \xi \) is an element of \( Ker(K^*) \). Consequently, \( \{\xi \in \mathcal{X} | \Gamma_i \xi = 0\} \subseteq Ker(K^*) \). Given that \( \{\Gamma_i\}_{i \in I} \) is recognized as a g-Bessel sequence, as per Proposition \ref{propo2.14} \( T_{\Gamma}^* \) is bounded, and a positive \( D \) exists such that \( \bar{p}_{\mathcal{X}}(T_{\Gamma}^*) \leq \sqrt{D} \). This implies that for all \( \{g_i\}_{i \in I} \) in \( l^2(\{\mathcal{X}_i\}_{i \in I}) \),
	$$
	\langle\sum_{i \in I} \Gamma_i^* g_i, \sum_{i \in I} \Gamma_i^* g_i\rangle \leq D\langle\{g_i\}_{i \in I},\{g_i\}_{i \in I}\rangle = D \sum_{i \in I}\langle g_i, g_i\rangle.
	$$
	Since \( T_{\Gamma}^* \) is bounded below, a positive value \( m \) can be determined, ensuring that
	$$
	m\langle\{g_i\}_{i \in I},\{g_i\}_{i \in I}\rangle \leq \langle T_{\Gamma}^*\{g_i\}_{i \in I}, T_{\Gamma}^*\{g_i\}_{i \in I}\rangle,
	$$
	for every \( \{g_i\}_{i \in I} \) in \( l^2(\{\mathcal{X}_i\}_{i \in I}) \). As a result,
	$$
	m \sum_{i \in I}\langle g_i, g_i\rangle \leq \langle\sum_{i \in I} \Gamma_i^* g_i, \sum_{i \in I} \Gamma_i^* g_i\rangle \leq D \sum_{i \in I}\langle g_i, g_i\rangle,
	$$
	confirming that \( \{\Gamma_i\}_{i \in I} \) is indeed a \( K \)-g-Riesz basis for \( \mathcal{X} \) related to \( \{\mathcal{X}_i\}_{i \in I} \).
\end{proof}

\begin{example}
	Consider a standard orthonormal basis $\left\{u_i\right\}_{i=0}^{\infty}$ for the space $\mathcal{X}$. For each $i$ in the set of natural numbers, we set $\mathcal{X}_i$ to be equivalent to $\mathcal{A}^2$. We can then introduce bounded operators, denoted $\Gamma_i: \mathcal{X} \rightarrow \mathcal{A}^2$, formulated as $\Gamma_i \xi = \left(\left\langle \xi, u_i\right\rangle,\left\langle \xi, u_{i+1}\right\rangle\right)$. Simultaneously, we describe another operator $K: \mathcal{X} \rightarrow \mathcal{X}$ by the relationship $K \xi = \sum_{i=0}^{\infty}\left\langle \xi, u_i\right\rangle u_{i+1} + \sum_{i=0}^{\infty}\left\langle \xi, u_{i+1}\right\rangle u_i$.
	
	From this, it emerges that the adjoint operator of $K$ is given by:
	$$
	K^* \xi=\sum_{i=0}^{\infty}\left\langle \xi, u_i\right\rangle u_{i+1}+\sum_{i=0}^{\infty}\left\langle \xi, u_{i+1}\right\rangle u_i, \text { for all } \xi \in \mathcal{X}
	$$
	Moreover, the inner product of $\Gamma_i \xi$ with itself is expressed as:
	$$
	\begin{aligned}
	\left\langle\Gamma_i \xi, \Gamma_i \xi\right\rangle & =\left\langle\left(\left\langle \xi, u_i\right\rangle,\left\langle \xi, u_{i+1}\right\rangle\right),\left(\left\langle \xi, u_i\right\rangle,\left\langle \xi, u_{i+1}\right\rangle\right)\right\rangle \\
	& =\left\langle \xi, u_i\right\rangle\left\langle u_i, \xi\right\rangle+\left\langle \xi, u_{i+1}\right\rangle\left\langle u_{i+1}, \xi\right\rangle, \text { for all } \xi \in \mathcal{X} \text { and } i \in \mathbf{N}.
	\end{aligned}
	$$
	
	For any vector $\xi$ in $\mathcal{X}$:
	$$
	\begin{aligned}
	\left\langle K^* \xi, K^* \xi\right\rangle & =\left\langle\sum_{i=0}^{\infty}\left\langle \xi, u_i\right\rangle u_{i+1}+\sum_{i=0}^{\infty}\left\langle \xi, u_{i+1}\right\rangle u_i, \sum_{i=0}^{\infty}\left\langle \xi, u_i\right\rangle u_{i+1}\right. \\
	& +\sum_{i=0}^{\infty}\left\langle \xi, u_{i+1}\right\rangle u_i \\
	& \leq \sum_{i=0}^{\infty}\left\langle \xi, u_i\right\rangle\left\langle u_i, \xi\right\rangle+\sum_{i=0}^{\infty}\left\langle \xi, u_{i+1}\right\rangle\left\langle u_{i+1}, \xi\right\rangle \\
	& \leq \sum_{i=0}^{\infty}\left\langle\Gamma_i \xi, \Gamma_i \xi\right\rangle .
	\end{aligned}
	$$
	
	It is clear that the sequence $\left\{\Gamma_i\right\}_{i \in \mathbf{N}}$ acts as a $K$-g-frame for $\mathcal{X}$ with respect to $\mathcal{A}^2$. Further, if for any chosen vector $\xi$ from $\mathcal{X}$ we have $\Gamma_i \xi = 0$ for all $i$ in the natural numbers, then it directly leads to $\left\langle K^* \xi, K^* \xi\right\rangle = 0$. This indicates that  $\left\{\xi \in \mathcal{X}: \Gamma_i \xi=0\text{ for all } i \in \mathbf{N}\right\}$ is a subset of $Ker(K^*)$.
	
	Delving further, for every index $i$ in the set of natural numbers, we have:
	\begin{equation}\label{eq3.5}
	\Gamma_i^*(\alpha, \beta)=\alpha u_i+\beta u_{i+1}, \text { for all }(\alpha, \beta) \in \mathcal{A}^2.
	\end{equation}
	
	Considering the sequence $\left\{\left(\alpha_i, \beta_i\right)\right\}_{i=0}^{\infty}$ which is defined as:
	$\left(\alpha_0, \beta_0\right)=(0,-1), \quad\left(\alpha_1, \beta_1\right)=(1,0), \quad\left(\alpha_i, \beta_i\right)=(0,0)$, for all $i \geq 2$.
	
	We observe that:
	$$
	\sum_{i=0}^{\infty} \Gamma_i^*\left(\alpha_i, \beta_i\right)=\Gamma_0^*\left(\alpha_0, \beta_0\right)+\Gamma_1^*\left(\alpha_1, \beta_1\right)=\Gamma_0^*(0,-1)+\Gamma_1^*(1,0)=-u_2+u_2=0.$$
	\end{example}

\begin{lemma}
	Consider the sequence \(\left\{\Gamma_i \in Hom_{\mathcal{A}}^*\left(\mathcal{X}, \mathcal{X}_i\right)\right\}_{i \in I}\). Assume for every index \(i \in I\), the set \(\left\{w_{i, j}: j \in J_i\right\}\) forms a standard orthonormal basis for \(\mathcal{X}_i\), with \(J_i\) being a subset of \(\mathbf{Z}\). Given the relationship 
	\begin{equation}\label{eq3.6}
	w_{i, j}=\Gamma_i^* u_{i, j} ; \text{ for all } i \in I \text{ and } j \in J_i ,
	\end{equation}
	we refer to the sequence \(\left\{w_{i, j}: i \in I, j \in J_i\right\}\) as the one derived from \(\left\{\Gamma_i\right\}_{i \in I}\) in relation to \(\left\{u_{i, j}: i \in I, j \in J_i\right\}\). Furthermore, for every index \(i \in I\), the subsequent relationships are valid:

\begin{equation}
	\Gamma_i \xi  =\sum_{j \in J_i}\left\langle \xi, w_{i, j}\right\rangle u_{i, j} , \text{ for any } \xi \in \mathcal{X},
\end{equation} 
\begin{equation}\label{eq3.8}
	\Gamma_i^* g_i  =\sum_{j \in J_i}\left\langle g_i, u_{i, j}\right\rangle w_{i, j} , \text{ for any } g_i \in \mathcal{X}_i .
\end{equation}
\end{lemma}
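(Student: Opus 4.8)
The plan is to reduce both displayed identities to a single elementary fact: in a Hilbert pro-$C^{*}$-module, expansion in a standard orthonormal basis is valid, so that for each $i\in I$ and each $h_{i}\in\mathcal{X}_{i}$ one has $h_{i}=\sum_{j\in J_{i}}\langle h_{i},u_{i,j}\rangle\,u_{i,j}$, the series converging in the seminorm topology of $\mathcal{X}_{i}$; here $\{u_{i,j}:j\in J_{i}\}$ is the standard orthonormal basis of $\mathcal{X}_{i}$ and the $w_{i,j}=\Gamma_{i}^{*}u_{i,j}$ are the derived vectors in $\mathcal{X}$ from \eqref{eq3.6}. This reconstruction property also yields, for free, convergence of every series that appears below, so no separate convergence argument will be needed.

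For the first identity I would fix $\xi\in\mathcal{X}$ and $i\in I$, apply the reconstruction formula to $h_{i}=\Gamma_{i}\xi\in\mathcal{X}_{i}$ to obtain $\Gamma_{i}\xi=\sum_{j\in J_{i}}\langle\Gamma_{i}\xi,u_{i,j}\rangle\,u_{i,j}$, and then rewrite each coefficient using the adjointness relation built into $Hom_{\mathcal{A}}^{*}(\mathcal{X},\mathcal{X}_{i})$, namely $\langle\Gamma_{i}\xi,u_{i,j}\rangle=\langle\xi,\Gamma_{i}^{*}u_{i,j}\rangle=\langle\xi,w_{i,j}\rangle$ by \eqref{eq3.6}. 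Substituting back gives $\Gamma_{i}\xi=\sum_{j\in J_{i}}\langle\xi,w_{i,j}\rangle\,u_{i,j}$, as claimed.

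For the second identity \eqref{eq3.8} I would fix $g_{i}\in\mathcal{X}_{i}$ and $i\in I$, apply the reconstruction formula to $g_{i}$ itself, $g_{i}=\sum_{j\in J_{i}}\langle g_{i},u_{i,j}\rangle\,u_{i,j}$ in $\mathcal{X}_{i}$, and then apply $\Gamma_{i}^{*}$ to both sides. Since $\Gamma_{i}^{*}$ is an adjointable bounded $\mathcal{A}$-module map, it is continuous for the relevant seminorm topologies and $\mathcal{A}$-linear, hence it may be carried through the convergent series, with the coefficients $\langle g_{i},u_{i,j}\rangle\in\mathcal{A}$ coming out on the left (recall $\mathcal{X}$ is a \emph{left} $\mathcal{A}$-module and $\langle\cdot,\cdot\rangle$ is $\mathcal{A}$-linear in the first slot). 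This produces $\Gamma_{i}^{*}g_{i}=\sum_{j\in J_{i}}\langle g_{i},u_{i,j}\rangle\,\Gamma_{i}^{*}u_{i,j}=\sum_{j\in J_{i}}\langle g_{i},u_{i,j}\rangle\,w_{i,j}$, again by \eqref{eq3.6}.

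The step that needs genuine care is the interchange of $\Gamma_{i}^{*}$ with the infinite sum: it rests on the continuity of a bounded adjointable operator with respect to the topologies generated by the seminorm families $\bar{p}_{\mathcal{X}_{i}}$ and $\bar{p}_{\mathcal{X}}$ (as in the definition of adjointable/bounded operator), together with the fact that the partial sums of $\sum_{j}\langle g_{i},u_{i,j}\rangle\,u_{i,j}$ form a convergent net in $\mathcal{X}_{i}$ — which is exactly what a standard orthonormal basis provides. Apart from that, everything is routine bookkeeping with the $\mathcal{A}$-valued inner product; the one thing to keep straight is that coefficients emerge on the \emph{left} of the $w_{i,j}$, matching the order in \eqref{eq3.8}.
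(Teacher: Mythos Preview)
Your proof is correct and follows essentially the same approach as the paper: expand in the orthonormal basis $\{u_{i,j}\}$ of $\mathcal{X}_i$, then use the adjoint relation $\langle\Gamma_i\xi,u_{i,j}\rangle=\langle\xi,\Gamma_i^*u_{i,j}\rangle$ for the first identity and push $\Gamma_i^*$ through the expansion of $g_i$ for the second. Your added remark on the continuity of $\Gamma_i^*$ justifying the interchange with the series is a welcome point of rigor that the paper leaves implicit.
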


\begin{proof}
	Given that for every \(i \in I\), the collection \(\left\{u_{i, j}: j \in J_i\right\}\) forms a standard orthonormal basis for \(\mathcal{X}_i\):
	
	1. For any chosen \(\xi \in \mathcal{X}\) and for all \(i \in I\), it follows that:
	\[
	\Gamma_i \xi = \sum_{j \in J_i} \left\langle \Gamma_i \xi, u_{i, j} \right\rangle u_{i, j} = \sum_{j \in J_i} \left\langle \xi, \Gamma_i^* u_{i, j} \right\rangle u_{i, j} = \sum_{j \in J_i} \left\langle \xi, w_{i, j} \right\rangle u_{i, j}.
	\]
	
	2. Similarly, for any \(g_i \in \mathcal{X}_i\), we can express:
	\[
	\Gamma_i^* g_i = \Gamma_i^*\left(\sum_{j \in J_i}\left\langle g_i, u_{i, j}\right\rangle u_{i, j}\right) = \sum_{j \in J_i}\left\langle g_i, u_{i, j}\right\rangle \Gamma_i^* u_{i, j} = \sum_{j \in J_i}\left\langle g_i, u_{i, j}\right\rangle w_{i, j}.
	\]
\end{proof}

\begin{theorem}\label{thrm3.10}
	Consider the set \(\left\{\Gamma_i\right\}\) where each \(\Gamma_i\) is an element of \(Hom_{\mathcal{A}}^*\left(\mathcal{X}, \mathcal{X}_i\right)\) for every index \(i \in I\). Additionally, let's refer to the sequence \(\left(w_{i, j}\right)\) as described in  \ref{eq3.6}.
	
	With this setup, the sequence \(\left\{\Gamma_i\right\}_{i \in I}\) will serve as a \(K\)-g-Riesz basis for the space \(\mathcal{X}\) in reference to \(\left\{\mathcal{X}_i\right\}_{i \in I}\). This holds true if and only if the sequence \(\left\{w_{i, j}: i \in I, j \in J_i\right\}\) establishes itself as a \(K\)-Riesz basis within \(\mathcal{X}\).
\end{theorem}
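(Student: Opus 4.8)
The plan is to read off both implications directly from the definitions, using the dictionary supplied by the preceding lemma — namely $\Gamma_i\xi=\sum_{j\in J_i}\langle\xi,w_{i,j}\rangle u_{i,j}$ and, from \ref{eq3.8}, $\Gamma_i^*g_i=\sum_{j\in J_i}\langle g_i,u_{i,j}\rangle w_{i,j}$ — together with the fact that $\{u_{i,j}:j\in J_i\}$ is a standard orthonormal basis of $\mathcal{X}_i$. The central device is the correspondence
\[
\{g_i\}_{i\in I}\ \longleftrightarrow\ \{\alpha_{i,j}\}_{i\in I,\,j\in J_i},\qquad \alpha_{i,j}=\langle g_i,u_{i,j}\rangle,\quad g_i=\sum_{j\in J_i}\alpha_{i,j}u_{i,j},
\]
which I would first establish as a bijection between $l^2(\{\mathcal{X}_i\}_{i\in I})$ and $l^2(\mathcal{A})$, the latter indexed over the countable set $\bigsqcup_{i\in I}J_i$, satisfying $\sum_{i\in I}\langle g_i,g_i\rangle=\sum_{i\in I}\sum_{j\in J_i}\alpha_{i,j}\alpha_{i,j}^*$ by Parseval in each $\mathcal{X}_i$ summed over $i$. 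Since $I$ is countable and each $J_i\subseteq\mathbf{Z}$, the family $\{w_{i,j}\}$ lives inside the finitely-or-countably-generated framework of Section 3, so the notion of a $K$-Riesz basis genuinely applies to it.

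\emph{Step 1 (the kernel conditions coincide).} From $\Gamma_i\xi=\sum_{j\in J_i}\langle\xi,w_{i,j}\rangle u_{i,j}$ and orthonormality of $\{u_{i,j}\}_{j}$, one gets $\Gamma_i\xi=0$ if and only if $\langle\xi,w_{i,j}\rangle=0$ for every $j\in J_i$; hence
\[
\{\xi\in\mathcal{X}:\Gamma_i\xi=0\ \forall i\in I\}=\{\xi\in\mathcal{X}:\langle\xi,w_{i,j}\rangle=0\ \forall i\in I,\ j\in J_i\},
\]
so condition (a) of the $K$-$g$-Riesz definition for $\{\Gamma_i\}$ is verbatim condition (1) of the $K$-Riesz definition for $\{w_{i,j}\}$, in both cases asserting that this set lies in $Ker(K^*)$.

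\emph{Step 2 (the norm-equivalence conditions coincide).} Applying $\Gamma_i^*$ to $g_i=\sum_j\alpha_{i,j}u_{i,j}$ and invoking \ref{eq3.8} gives $\Gamma_i^*g_i=\sum_{j\in J_i}\alpha_{i,j}w_{i,j}$; summing over $i$ and reordering the resulting unconditionally convergent double series yields $\sum_{i\in I}\Gamma_i^*g_i=\sum_{i\in I,\,j\in J_i}\alpha_{i,j}w_{i,j}$. Combining this with the square-sum identity from the first paragraph, the chain of inequalities in part (b) for $\{\Gamma_i\}$ becomes, term for term and with the very same constants $C,D$,
\[
C\sum_{i\in I,\,j\in J_i}\alpha_{i,j}\alpha_{i,j}^*\ \leq\ \Big\langle\sum_{i\in I,\,j\in J_i}\alpha_{i,j}w_{i,j},\ \sum_{i\in I,\,j\in J_i}\alpha_{i,j}w_{i,j}\Big\rangle\ \leq\ D\sum_{i\in I,\,j\in J_i}\alpha_{i,j}\alpha_{i,j}^*,
\]
i.e.\ condition (2) for $\{w_{i,j}\}$; and since the correspondence is onto, quantifying over all $\{g_i\}\in l^2(\{\mathcal{X}_i\})$ is the same as quantifying over all $\{\alpha_{i,j}\}\in l^2(\mathcal{A})$. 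Conjoining Steps 1 and 2 yields both directions of the equivalence simultaneously, and neither definition imposes a non-vanishing requirement on its generators, so nothing further is needed.

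The main obstacle I anticipate is purely the analytic bookkeeping in the pro-$C^*$ setting, not anything conceptual: verifying that $\{g_i\}\mapsto\{\alpha_{i,j}\}$ is well defined and surjective from $l^2(\{\mathcal{X}_i\}_{i\in I})$ onto $l^2(\mathcal{A})$ with $\sum_i\langle g_i,g_i\rangle=\sum_{i,j}\alpha_{i,j}\alpha_{i,j}^*$; checking that $\sum_j\alpha_{i,j}u_{i,j}$ and $\sum_{i,j}\alpha_{i,j}w_{i,j}$ converge in the topology generated by the seminorms $\bar p_{\mathcal{X}}$ (resp.\ on $\mathcal{X}_i$) and that the double sum over $i$ and $j$ may be reordered; and justifying that the adjointable — hence bounded — map $\Gamma_i^*$ commutes with the relevant series. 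These points follow from the description of the synthesis operators (Proposition \ref{propo2.14} and the surrounding discussion) and from unconditional convergence in Hilbert pro-$C^*$-modules, so I expect no genuine surprises beyond keeping the seminorm estimates honest.
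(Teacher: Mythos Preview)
Your proposal is correct and follows essentially the same approach as the paper: both proofs use the orthonormal expansion $g_i=\sum_{j\in J_i}\alpha_{i,j}u_{i,j}$ together with the lemma's formulas $\Gamma_i\xi=\sum_j\langle\xi,w_{i,j}\rangle u_{i,j}$ and $\Gamma_i^*g_i=\sum_j\alpha_{i,j}w_{i,j}$ to show that the kernel conditions coincide as sets and that the two-sided inequalities translate into one another with the same constants. Your version is in fact somewhat more careful about the bijectivity of the correspondence $\{g_i\}\leftrightarrow\{\alpha_{i,j}\}$ and the attendant convergence issues, which the paper passes over in silence.
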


\begin{proof}
	Consider the assumption that the sequence \(\left\{\Gamma_i\right\}_{i \in I}\) is a \(K\)-g-Riesz basis for \(\mathcal{X}\) in relation to \(\left\{\mathcal{X}_i\right\}_{i \in I}\). Given this assumption, we have:
	$$
	\left\{\xi \in \mathcal{X}: \Gamma_i \xi=0 \text { for all } i \in I\right\} \subset Ker\left(K^*\right),
	$$
	and constants \(C, D>0\) such that for all sequences \(\left\{g_i\right\}_{i \in I} \in l^2\left(\left\{\mathcal{X}_i\right\}_{i \in I}\right)\):
	$$
	C \sum_{i \in I}\left\langle g_i, g_i\right\rangle \leq\left\langle\sum_{i \in I} \Gamma_i^* g_i, \sum_{i \in I} \Gamma_i^* g_i\right\rangle \leq D \sum_{i \in I}\left\langle g_i, g_i\right\rangle.
	$$
	Recall that for every \(i \in I\), \(\left\{u_{i, j}: j \in J_i\right\}\) forms a standard orthonormal basis for \(\mathcal{X}_i\). Thus, every element \(g_i \in \mathcal{X}_i\) can be expressed as:
	$$
	g_i=\sum_{j \in J_i} \alpha_{i, j} u_{i, j},
	$$
	with coefficients \(\left\{\alpha_{i, j}: j \in J_i\right\}\) belonging to \(l^2\left(J_i\right)\). This leads to the equivalent condition:
	$$
	C\sum_{i \in I} \sum_{j \in J_i} \alpha_{i, j} \alpha_{i, j}^* \leq\left\langle\sum_{i \in I} \sum_{j \in J_i} \alpha_{i, j} w_{i, j}, \sum_{i \in I} \sum_{j \in J_i} \alpha_{i, j} w_{i, j}\right\rangle \leq D \sum_{i \in I} \sum_{j \in J_i} \alpha_{i, j} \alpha_{i, j}^*.
	$$
	From the relation \(\Gamma_i \xi=\sum_{j \in J_i}\left\langle \xi, w_{i, j}\right\rangle u_{i, j}\) for all \(i \in I\), it becomes evident that:
	$$
	\left\{\xi \in \mathcal{X}: \Gamma_i \xi=0\right\} \text { for all } i \in I = \left\{\xi \in \mathcal{X}:\left\langle \xi, w_{i, j}\right\rangle=0\right\} \text { for all } i \in I \text { and } j \in J_i.
	$$
	From this, it follows that:
	$$
	\left\{\xi \in \mathcal{X}: \Gamma_i \xi=0 \text { for all } i \in I\right\} \subset Ker\left(K^*\right),
	$$
	if and only if:
	$$
	\left\{\xi \in \mathcal{X}:\left\langle \xi, w_{i, j}\right\rangle=0 \text { for all } i \in I \text { and } j \in J_i\right\} \subset Ker \left(K^*\right).
	$$
	In conclusion, the sequence \(\left\{\Gamma_i\right\}_{i \in I}\) acts as a \(K\)-g-Riesz basis for \(\mathcal{X}\) in relation to \(\left\{\mathcal{X}_i\right\}_{i \in I}\) if and only if the set \(\left\{w_{i, j}: i \in I, j \in J_i\right\}\) is a \(K\)-Riesz basis for \(\mathcal{X}\).

\end{proof}

\begin{theorem}\label{thrm3.10}
	Consider the set \(\left\{\Gamma_i\right\}\) where each \(\Gamma_i\) belongs to \(Hom_{\mathcal{A}}^*\left(\mathcal{X}, \mathcal{X}_i\right)\) for each \(i \in I\), and the sequence \(\left\{w_{i, j}\right\}\) as outlined in
	 \ref{eq3.6}. It is found that  \(\left\{\Gamma_i\right\}_{i \in I}\) serves as a g-orthonormal basis for \(\mathcal{X}\), relative to \(\left\{\mathcal{X}_i\right\}_{i \in I}\), if and only if the sequence \(\left\{w_{i, j}: i \in I, j \in J_i\right\}\) establishes itself as a conventional orthonormal basis for \(\mathcal{X}\).
\end{theorem}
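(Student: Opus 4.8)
The plan is to deduce the equivalence directly from the two expansion formulas established in the preceding lemma, namely $\Gamma_i \xi = \sum_{j \in J_i} \langle \xi, w_{i,j}\rangle u_{i,j}$ for $\xi \in \mathcal{X}$ and $\Gamma_i^* g_i = \sum_{j \in J_i} \langle g_i, u_{i,j}\rangle w_{i,j}$ for $g_i \in \mathcal{X}_i$ (see \eqref{eq3.8}), together with the hypothesis that each $\{u_{i,j}: j \in J_i\}$ is a standard orthonormal basis of $\mathcal{X}_i$. First I would record the elementary remark that for a doubly-indexed family the conjunction of the orthonormality relations $\langle w_{i,j}, w_{k,l}\rangle = \delta_{(i,j),(k,l)}\,1_{\mathcal{A}}$ and the Parseval identity $\langle \xi, \xi\rangle = \sum_{i \in I}\sum_{j \in J_i}\langle \xi, w_{i,j}\rangle\langle w_{i,j}, \xi\rangle$ is equivalent to $\{w_{i,j}\}$ being a standard orthonormal basis of $\mathcal{X}$; this lets me work throughout with inner-product (Parseval-type) identities rather than reconstruction formulas.

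For the forward direction, assume $\{\Gamma_i\}_{i\in I}$ is a $g$-orthonormal basis for $\mathcal{X}$. Orthonormality of the $w_{i,j}$ follows from the computation $\langle w_{i,j}, w_{k,l}\rangle = \langle \Gamma_i^* u_{i,j}, \Gamma_k^* u_{k,l}\rangle = \delta_{i,k}\langle u_{i,j}, u_{k,l}\rangle$ by the defining relation \eqref{eq2.3}, and then $\langle u_{i,j}, u_{i,l}\rangle = \delta_{j,l}\,1_{\mathcal{A}}$ because $\{u_{i,j}\}_{j}$ is orthonormal in $\mathcal{X}_i$. For completeness, I substitute $\Gamma_i \xi = \sum_{j}\langle \xi, w_{i,j}\rangle u_{i,j}$ into $\langle \Gamma_i \xi, \Gamma_i \xi\rangle$ and collapse the resulting double sum using orthonormality of $\{u_{i,j}\}_{j}$, obtaining $\langle \Gamma_i \xi, \Gamma_i \xi\rangle = \sum_{j\in J_i}\langle \xi, w_{i,j}\rangle\langle w_{i,j}, \xi\rangle$; summing over $i$ and invoking $\sum_{i}\langle \Gamma_i \xi, \Gamma_i \xi\rangle = \langle \xi, \xi\rangle$ yields the Parseval identity for $\{w_{i,j}\}$, hence it is a standard orthonormal basis of $\mathcal{X}$.

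For the converse, assume $\{w_{i,j}: i\in I, j\in J_i\}$ is a standard orthonormal basis of $\mathcal{X}$. To verify \eqref{eq2.3}, I plug \eqref{eq3.8} into $\langle \Gamma_i^* g_i, \Gamma_j^* g_j\rangle$, apply orthonormality of the $w$'s to reduce it to $\delta_{i,j}\sum_{k\in J_i}\langle g_i, u_{i,k}\rangle\langle u_{i,k}, g_j\rangle$, and then use the reconstruction $g_i = \sum_{k}\langle g_i, u_{i,k}\rangle u_{i,k}$ in $\mathcal{X}_i$ to recognize this expression as $\delta_{i,j}\langle g_i, g_j\rangle$. The second axiom of a $g$-orthonormal basis follows from the identity $\langle \Gamma_i \xi, \Gamma_i \xi\rangle = \sum_{j}\langle \xi, w_{i,j}\rangle\langle w_{i,j}, \xi\rangle$ derived exactly as above, now summed over $i$ and combined with the Parseval identity for the orthonormal basis $\{w_{i,j}\}$.

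The main technical point — more bookkeeping than conceptual obstacle — is justifying the rearrangement of the iterated, possibly infinite, sums over $(i,j)$ and the continuity of $\langle \cdot,\cdot\rangle$ needed to pass the identity $\langle \Gamma_i \xi, \Gamma_i \xi\rangle = \sum_{j}\langle\xi, w_{i,j}\rangle\langle w_{i,j},\xi\rangle$, and its sum over $i$, through the family of seminorms defining the pro-$C^*$ topology; here one must check that the relevant series converge in the senses used for $l^2(\{\mathcal{X}_i\}_{i\in I})$ and $\mathcal{X}$ and that the double sum may be organized as an iterated sum. Once that is in place, the remainder is a routine tracking of Kronecker deltas.
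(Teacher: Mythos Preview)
Your proposal is correct and follows essentially the same route as the paper: both directions are handled by the same computations, using \eqref{eq3.6}, \eqref{eq3.8} and the defining relation \eqref{eq2.3} together with orthonormality of each $\{u_{i,j}\}_{j\in J_i}$. If anything, you are slightly more thorough in the converse, since you explicitly verify the Parseval identity $\sum_{i\in I}\langle \Gamma_i \xi, \Gamma_i \xi\rangle = \langle \xi,\xi\rangle$, whereas the paper's proof states only that checking \eqref{eq2.3} suffices.
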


\begin{proof}
	$\quad(\Rightarrow)$ Suppose the sequence $\left\{\Gamma_i\right\}_{i \in I}$ serves as a $g$-orthonormal basis for $\mathcal{X}$ with respect to $\left\{\mathcal{X}_i\right\}_{i \in I}$. Given references \ref{eq3.6} and \ref{eq2.3}, we deduce that
	$$
	\begin{aligned}
	\left\langle w_{i_1, j_1}, w_{i_2, j_2}\right\rangle & =\left\langle\Gamma_{i_1}^* u_{i_1, j_1}, \Gamma_{i_2}^* u_{i_2, j_2}\right\rangle \\
	& =\delta_{i_1, i_2}\left\langle u_{i_1, j_1}, u_{i_2, j_2}\right\rangle \\
	& =\delta_{i_1, i_2} \delta_{j_1, j_2},
	\end{aligned}
	$$
	for every choice of $i_1, i_2 \in I$, $j_1 \in J_{i_1}$, and $j_2 \in J_{i_2}$.
	Thus, the set $\left\{w_{i, j}: i \in I, j \in J_i\right\}$ qualifies as a standard orthonormal basis.
	Additionally, note that
	$$
	\begin{aligned}
	& \text { For each } \xi \in \mathcal{X}, \quad\langle \xi, \xi\rangle=\sum_{i \in I}\left\langle\Gamma_i \xi, \Gamma_i \xi\right\rangle \\
	& =\sum_{i \in I}\left\langle\sum_{j \in J_i}\left\langle \xi, w_{i, j}\right\rangle u_{i, j}, \sum_{j \in J_i}\left\langle \xi, w_{i, j}\right\rangle u_{i, j}\right\rangle \\
	& =\sum_{i \in I} \sum_{j \in J_i}\left\langle \xi, w_{i, j}\right\rangle\left\langle w_{i, j}, \xi\right\rangle \text {. } \\
	\end{aligned}
	$$
	Consequently, $\left\{w_{i, j}: i \in I, j \in J_i\right\}$ is recognized as a standard orthonormal basis for $\mathcal{X}$.
	
	$(\Leftarrow)$ To validate this direction, it suffices to verify \ref{eq2.3}. Indeed, from \ref{eq3.8}, for distinct $i, j \in I$ and arbitrary $g_i \in \mathcal{X}_i$ and $g_j \in \mathcal{X}_j$,
	$$
	\left\langle \Gamma_i^* g_i, \Gamma_j^* g_j\right\rangle=\left\langle\sum_{k \in J_i}\left\langle g_i, u_{i, k}\right\rangle w_{i, k}, \sum_{l \in J_j}\left\langle g_j, u_{j, l}\right\rangle w_{j, l}\right\rangle=0,
	$$
	and for each $g_i \in \mathcal{X}_i$,
	$$
	\left\langle \Gamma_i^* g_i, \Gamma_i^* g_i\right\rangle=\left\langle\sum_{k \in J_i}\left\langle g_i, u_{i, k}\right\rangle w_{i, k}, \sum_{l \in J_i}\left\langle g_i, u_{i, l}\right\rangle w_{i, l}\right\rangle=\left\langle g_i, g_i\right\rangle .
	$$
	Hence, $\left\{\Gamma_i\right\}_{i \in I}$ functions as a $g$-orthonormal basis for $\mathcal{X}$ when considered with respect to $\left\{\mathcal{X}_i\right\}_{i \in I}$.
\end{proof}

\begin{theorem}\label{thrm3.11}
	Given the sequence $\left\{\Gamma_i \in Hom_{\mathcal{A}}^*\left(\mathcal{X}, \mathcal{X}_i\right): i \in I\right\}$, it acts as a $K$-g-Riesz basis for $\mathcal{X}$ in association with $\left\{\mathcal{X}_i\right\}_{i \in I}$ if and only if there is a g-orthonormal basis  which we denote by	
	$\left\{\Xi_i \in Hom_{\mathcal{A}}^*\left(\mathcal{X}, \mathcal{X}_i\right): i \in I\right\}$ for $\mathcal{X}$, along with a bounded surjective operator $T \in Hom_{\mathcal{A}}^*(\mathcal{X})$ so that $\Gamma_i=\Xi_i T$ for every $i \in I$, and $Ran(K) \subset Ran\left(T^*\right)$.
\end{theorem}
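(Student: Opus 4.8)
The plan is to obtain Theorem~\ref{thrm3.11} by splicing together the characterizations already proved: the correspondence ``$\{\Gamma_i\}$ is a $K$-$g$-Riesz basis $\iff$ the derived sequence $w_{i,j}=\Gamma_i^{*}u_{i,j}$ is a $K$-Riesz basis'' (the $g$-Riesz form of Theorem~\ref{thrm3.10}), the correspondence ``$\{\Xi_i\}$ is a $g$-orthonormal basis $\iff$ $\{\Xi_i^{*}u_{i,j}\}$ is an orthonormal basis of $\mathcal{X}$'' (the $g$-orthonormal form of Theorem~\ref{thrm3.10}), and Theorem~\ref{thrm3.3} (a $K$-Riesz basis is precisely the image of an orthonormal basis under a bounded below $\Phi\in Hom_{\mathcal{A}}^{*}(\mathcal{X})$ with $Ran(K)\subset Ran(\Phi)$). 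These are tied to the present statement by the single observation $T=\Phi^{*}$: by Proposition~\ref{prop2.2}, $\Phi$ is bounded below $\iff$ $T$ is onto, and $Ran(\Phi)=Ran(T^{*})$, so ``$Ran(K)\subset Ran(\Phi)$'' becomes ``$Ran(K)\subset Ran(T^{*})$''.

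$(\Rightarrow)$ If $\{\Gamma_i\}_{i\in I}$ is a $K$-$g$-Riesz basis, the $g$-Riesz form of Theorem~\ref{thrm3.10} makes $\{w_{i,j}:i\in I,\ j\in J_i\}$ with $w_{i,j}=\Gamma_i^{*}u_{i,j}$ a $K$-Riesz basis. Index the standard orthonormal basis of $\mathcal{X}$ along a bijection with $\{(i,j):i\in I,\ j\in J_i\}$ and call it $\{e_{i,j}\}$; Theorem~\ref{thrm3.3} yields a bounded below $\Phi\in Hom_{\mathcal{A}}^{*}(\mathcal{X})$ with $\Phi e_{i,j}=w_{i,j}$ and $Ran(K)\subset Ran(\Phi)$. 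Define $\Xi_i\colon\mathcal{X}\to\mathcal{X}_i$ by $\Xi_i\xi=\sum_{j\in J_i}\langle\xi,e_{i,j}\rangle u_{i,j}$, so that $\Xi_i^{*}u_{i,j}=e_{i,j}$; since $\{\Xi_i^{*}u_{i,j}\}$ is the orthonormal basis $\{e_{i,j}\}$, the $g$-orthonormal form of Theorem~\ref{thrm3.10} shows $\{\Xi_i\}_{i\in I}$ is a $g$-orthonormal basis. Put $T=\Phi^{*}$: by Proposition~\ref{prop2.2}, $T$ is bounded and onto, and $Ran(K)\subset Ran(\Phi)=Ran(T^{*})$; moreover $(\Xi_iT)^{*}u_{i,j}=T^{*}\Xi_i^{*}u_{i,j}=\Phi e_{i,j}=w_{i,j}=\Gamma_i^{*}u_{i,j}$ for all $i,j$, and since $\{u_{i,j}\}$ is an orthonormal basis of $\mathcal{X}_i$ this forces $\Gamma_i=\Xi_iT$.

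$(\Leftarrow)$ Given a $g$-orthonormal basis $\{\Xi_i\}$, a bounded onto $T\in Hom_{\mathcal{A}}^{*}(\mathcal{X})$ with $\Gamma_i=\Xi_iT$ for all $i$, and $Ran(K)\subset Ran(T^{*})$, the $g$-orthonormal form of Theorem~\ref{thrm3.10} makes $\widetilde{w}_{i,j}:=\Xi_i^{*}u_{i,j}$ an orthonormal basis of $\mathcal{X}$; from $\Gamma_i^{*}=T^{*}\Xi_i^{*}$ the derived sequence of $\{\Gamma_i\}$ is $w_{i,j}=\Gamma_i^{*}u_{i,j}=T^{*}\widetilde{w}_{i,j}$. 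With $\Phi:=T^{*}$ (bounded below by Proposition~\ref{prop2.2}, with $Ran(K)\subset Ran(\Phi)$ and $\Phi\widetilde{w}_{i,j}=w_{i,j}$), Theorem~\ref{thrm3.3} read over the orthonormal basis $\{\widetilde{w}_{i,j}\}$ shows $\{w_{i,j}\}$ is a $K$-Riesz basis, and the $g$-Riesz form of Theorem~\ref{thrm3.10} then gives that $\{\Gamma_i\}_{i\in I}$ is a $K$-$g$-Riesz basis.

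The delicate points are: Theorem~\ref{thrm3.3} must be invoked over an arbitrary orthonormal basis of $\mathcal{X}$ in the converse (legitimate since its proof uses only orthonormality and the Parseval identity, or one conjugates by the unitary carrying the standard basis onto $\{\widetilde{w}_{i,j}\}$); and the equivalence ``$Ran(K)\subset Ran(\Phi)$'' $\iff$ ``$Ran(K)\subset Ran(T^{*})$'' should be justified through Lemma~\ref{lem2.15}(ii) — $Ran(T)=\mathcal{X}$ is closed, hence $Ran(T^{*})=Ker(T)^{\perp}$ is closed, so one never loses the inclusion into a closure. The remaining items (adjointability of the displayed $\Xi_i$ and the termwise identity $\Gamma_i=\Xi_iT$) are immediate from the formulas (\ref{eq3.6}) and (\ref{eq3.8}) for $w_{i,j}$ and $\Gamma_i^{*}$.
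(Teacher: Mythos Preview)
Your proof is correct and follows essentially the same route as the paper's: both directions pass through the derived sequence $w_{i,j}=\Gamma_i^{*}u_{i,j}$, invoke the two forms of Theorem~\ref{thrm3.10} together with Theorem~\ref{thrm3.3}, and hinge on the substitution $T=\Phi^{*}$ combined with Proposition~\ref{prop2.2}. Your explicit flagging of the two delicate points (applying Theorem~\ref{thrm3.3} over a relabeled orthonormal basis, and the closedness of $Ran(T^{*})$ via Lemma~\ref{lem2.15}) is a welcome addition that the paper leaves implicit.
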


\begin{proof}
	For each $i \in I$, let $\left\{u_{i, j}: j \in J_i\right\}$ be the standard orthonormal basis of $\mathcal{X}_i$. Assume that the set $\left\{\Gamma_i\right\}_{i \in I}$ forms a $K$-g-Riesz basis for $\mathcal{X}$ relative to $\left\{\mathcal{X}_i\right\}_{i \in I}$. By invoking Theorem \ref{thrm3.9}, we can identify a $K$-Riesz basis $\left\{w_{i, j}: i \in I, j \in J_i\right\}$ for $\mathcal{X}$ such that
	\[
	\Gamma_i \xi = \sum_{j \in J_i} \left\langle \xi, w_{i, j} \right\rangle u_{i, j}, \text{ for every } i \in I \text{ and } \xi \in \mathcal{X}.
	\]
	
	Now, consider the standard orthonormal set $\left\{y_{i, j}: i \in I, j \in J_i\right\}$ for $\mathcal{X}$. Owing to the fact that $\left\{w_{i, j}: i \in I, j \in J_i\right\}$ is a $K$-Riesz basis for $\mathcal{X}$, Theorem \ref{thrm3.3} guarantees the existence of a bounded below operator $\Phi \in Hom_{\mathcal{A}}^*(\mathcal{X})$ such that
	\[
	\Phi y_{i, j} = w_{i, j}, \text{ for all } i \in I \text{ and } j \in J_i,
	\]
	and furthermore, $Ran(K) \subset Ran(\Phi)$. Setting $T = \Phi^*$, we find from Proposition \ref{prop2.2} that $T$ is a bounded, surjective operator from $\mathcal{X}$ to $\mathcal{X}$ and $Ran(K) \subset Ran\left(T^*\right)$.
	
	For each $i \in I$, let's choose $\Xi_i$ in $Hom_{\mathcal{A}}^*\left(\mathcal{X}, \mathcal{X}_i\right)$ such that
	\[
	\Xi_i g = \sum_{j \in J_i} \left\langle g, y_{i, j} \right\rangle u_{i, j}, \text{ for every } g \in \mathcal{X}.
	\]
	According to Theorem \ref{thrm3.10},  $\left\{\Xi_i\right\}_{i \in I}$ serves as a $g$-orthonormal basis for $\mathcal{X}$. Additionally, for each $\xi$ in $\mathcal{X}$ and $i$ in $I$, we have:
	
	\[
	\Xi_i T \xi = \Gamma_i \xi.
	\]
	Thus, for all $i \in I$, $\Gamma_i = \Xi_i T$.
	
	On the converse side, if we recognize $\left\{\Xi_i\right\}_{i \in I}$ as a $g$-orthonormal basis for $\mathcal{X}$ and there exists a bounded surjective operator $T$ on $\mathcal{X}$ such that for all $i \in I$, $\Gamma_i = \Xi_i T $ and the range of $Ran(K)$ is included in  $Ran(T^*)$, then the subsequent relations hold:
	
	\begin{align*}
	\left\langle\Xi_i^* g_i, \Xi_j^* g_j\right\rangle &= \delta_{i, j}\left\langle g_i, g_j\right\rangle, \\
	\sum_{i \in I} \left\langle\Xi_i \xi, \Xi_i\xi\right\rangle &= \langle \xi, \xi \rangle.
	\end{align*}
	
	If, for all $i$ in $I$ and $\xi$ in $\mathcal{X}$, $\Gamma_i \xi = 0$, then $\xi$ belongs to the kernel of $T$. Given the conditions mentioned, and referencing Theorem \ref{thrm3.10}, we can identify a canonical orthonormal set $\left\{y_{i, j}: i \in I, j \in J_i\right\}$ for $\mathcal{X}$. Thus,
	\[
	\Gamma_i \xi = \sum_{j \in J_i}\left\langle \xi, T^* y_{i, j}\right\rangle u_{i, j}, \text{ for every } i \in I \text{ and } \xi \in \mathcal{X}.
	\]
	
	Given that $T^*$ is bounded below and $Ran(K) \subset Ran\left(T^*\right)$ as per Proposition \ref{prop2.2}, Theorem \ref{thrm3.3} implies that $\left\{T^* y_{i, j}: i \in I, j \in J_i\right\}$ is a $K$-Riesz basis for $\mathcal{X}$. Concluding from Theorem \ref{thrm3.9},  $\left\{\Gamma_i\right\}_{i \in I}$ indeed forms a $K$-g-Riesz basis for $\mathcal{X}$ relative to $\left\{\mathcal{X}_i\right\}_{i \in I}$.
	
\end{proof}

\begin{theorem}
	Given a set $\left\{\Gamma_i \in Hom_{\mathcal{A}}^*\left(\mathcal{X}, \mathcal{X}_i\right): i \in I\right\}$ constituting a $K$-$g$-Riesz basis for $\mathcal{X}$ aligned with $\left\{\mathcal{X}_i\right\}_{i \in I}$, and provided that each $\mathcal{X}_i$ possesses a Riesz basis represented by $\left\{z_{i, j}\right\}_{j \in P_i}$, for every $i \in I$, accompanied by bounded entities $A_i$ and $B_i$ satisfying $0 < \underset{i}{\text{inf}}\ A_i$ and $\underset{i}{\text{sup}}\ B_i < \infty$, with $P_i$ being a subset of $\mathbf{Z}$, it follows that the set $\left\{\Gamma_i^* z_{i, j}\right\}_{i \in I, j \in P_i}$ emerges as a $K$-Riesz basis for $\mathcal{X}$.
\end{theorem}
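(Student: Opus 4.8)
The plan is to check, one at a time, the two defining clauses of a $K$-Riesz basis for the reindexed family $\{\Gamma_i^* z_{i,j}\}_{i\in I,\,j\in P_i}$, feeding the two-sided bounds of the $K$-$g$-Riesz basis $\{\Gamma_i\}_{i\in I}$ into the uniform Riesz bounds of the families $\{z_{i,j}\}_{j\in P_i}$. Two bookkeeping identities do all the work: for $\xi\in\mathcal{X}$ one has $\langle \xi,\Gamma_i^* z_{i,j}\rangle = \langle \Gamma_i\xi, z_{i,j}\rangle$, and for a coefficient family $\{\alpha_{i,j}\}$ in $l^2(\mathcal{A})$ one has the regrouping $\sum_{i,j}\alpha_{i,j}\Gamma_i^* z_{i,j} = \sum_{i\in I}\Gamma_i^* g_i$ with $g_i := \sum_{j\in P_i}\alpha_{i,j}z_{i,j}\in\mathcal{X}_i$.

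For the first clause: since each $\{z_{i,j}\}_{j\in P_i}$ is a Riesz basis, hence a frame, for $\mathcal{X}_i$, its analysis operator on $\mathcal{X}_i$ is injective (the lower Riesz/frame bound forces $g=0$ once all coefficients $\langle g, z_{i,j}\rangle$ vanish). Applying this with $g=\Gamma_i\xi$ shows that $\langle \Gamma_i\xi, z_{i,j}\rangle = 0$ for all $j\in P_i$ iff $\Gamma_i\xi = 0$, so $\{\xi\in\mathcal{X}:\langle \xi,\Gamma_i^* z_{i,j}\rangle = 0\ \text{for all}\ i,j\}$ coincides with $\{\xi\in\mathcal{X}:\Gamma_i\xi = 0\ \text{for all}\ i\in I\}$, which is contained in $Ker(K^*)$ by the defining property of the $K$-$g$-Riesz basis $\{\Gamma_i\}_{i\in I}$.

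For the second clause: fix $\{\alpha_{i,j}\}\in l^2(\mathcal{A})$. For each $i$ the subfamily $\{\alpha_{i,j}\}_{j\in P_i}$ lies in $l^2(\mathcal{A})$, so $g_i$ converges in $\mathcal{X}_i$ and the two-sided estimate of the Riesz basis $\{z_{i,j}\}_{j\in P_i}$ gives $A_i\sum_{j}\alpha_{i,j}\alpha_{i,j}^* \le \langle g_i, g_i\rangle \le B_i\sum_{j}\alpha_{i,j}\alpha_{i,j}^*$. Because $\sum_j\alpha_{i,j}\alpha_{i,j}^*\ge 0$ in $\mathcal{A}$ and $0<\inf_i A_i\le A_i$, $B_i\le\sup_i B_i<\infty$, summing over $i$ produces $(\inf_i A_i)\sum_{i,j}\alpha_{i,j}\alpha_{i,j}^* \le \sum_{i}\langle g_i, g_i\rangle \le (\sup_i B_i)\sum_{i,j}\alpha_{i,j}\alpha_{i,j}^*$; in particular $\sum_i\langle g_i,g_i\rangle$ converges, so $\{g_i\}_{i\in I}\in l^2(\{\mathcal{X}_i\}_{i\in I})$. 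Then I would invoke the $K$-$g$-Riesz bounds $C,D$ of $\{\Gamma_i\}$ applied to $\{g_i\}_{i\in I}$, namely $C\sum_i\langle g_i,g_i\rangle \le \langle \sum_i\Gamma_i^* g_i, \sum_i\Gamma_i^* g_i\rangle \le D\sum_i\langle g_i,g_i\rangle$, and compose the two chains; via the regrouping identity this yields $C(\inf_i A_i)\sum_{i,j}\alpha_{i,j}\alpha_{i,j}^* \le \langle \sum_{i,j}\alpha_{i,j}\Gamma_i^* z_{i,j}, \sum_{i,j}\alpha_{i,j}\Gamma_i^* z_{i,j}\rangle \le D(\sup_i B_i)\sum_{i,j}\alpha_{i,j}\alpha_{i,j}^*$. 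Setting $C' = C\inf_i A_i>0$ and $D' = D\sup_i B_i<\infty$ closes the second clause, and together with the first this shows $\{\Gamma_i^* z_{i,j}\}_{i\in I,\,j\in P_i}$ is a $K$-Riesz basis for $\mathcal{X}$.

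The one point needing genuine care, rather than symbol pushing, is the regrouping $\sum_{i,j}\alpha_{i,j}\Gamma_i^* z_{i,j}=\sum_i\Gamma_i^* g_i$ and the handling of these infinite sums: I would first establish the upper estimate on finite partial sums, then use the resulting bound together with Proposition \ref{propo2.12} (respectively Proposition \ref{propo2.14}) to conclude that $\{\Gamma_i^* z_{i,j}\}$ is a Bessel sequence, which makes the synthesis sums well defined and the interchange of summation legitimate by continuity of the $\Gamma_i^*$ and convergence of $\sum_i\langle g_i, g_i\rangle$. I also want to be careful that the order on $\mathcal{A}$ is preserved when multiplying the positive elements $\sum_j\alpha_{i,j}\alpha_{i,j}^*$ by the nonnegative scalars $A_i-\inf_k A_k$ and $\sup_k B_k - B_i$; this is precisely where the hypotheses $0<\inf_i A_i$ and $\sup_i B_i<\infty$ enter, since without them one would only obtain index-by-index inequalities rather than uniform Riesz bounds. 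Beyond this bookkeeping I anticipate no real obstacle.
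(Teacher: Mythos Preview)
Your proof is correct and follows essentially the same route as the paper: verify the kernel condition by using that each $\{z_{i,j}\}_{j\in P_i}$ is a Riesz basis for $\mathcal{X}_i$ (so $\langle \Gamma_i\xi,z_{i,j}\rangle=0$ for all $j$ forces $\Gamma_i\xi=0$), and obtain the two-sided bound by composing the uniform Riesz estimates for $\{z_{i,j}\}$ with the $K$-$g$-Riesz bounds for $\{\Gamma_i\}$, arriving at the constants $C\inf_i A_i$ and $D\sup_i B_i$. If anything, you are more careful than the paper about the legitimacy of the regrouping $\sum_{i,j}\alpha_{i,j}\Gamma_i^* z_{i,j}=\sum_i\Gamma_i^* g_i$ and the passage through Propositions~\ref{propo2.12}/\ref{propo2.14}; the paper simply writes the chain of inequalities without commenting on convergence.
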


\begin{proof}
	 Since $\left\{\Gamma_i \in Hom_{\mathcal{A}}^*\left(\mathcal{X}, \mathcal{X}_i\right): i \in I\right\}$ is a $K$-g-Riesz basis for $\mathcal{X}$ with respect to $\left\{\mathcal{X}_i\right\}_{i \in I}$, then
$$\left\{\xi \in \mathcal{X}: \Gamma_i \xi=0\right., \text{ for all } \left.i \in I\right\} \subset Ker\left(K^*\right),$$
and there exist constants $C, D>0$ such that for all $\left\{g_i\right\}_{i \in I} \in l^2\left(\left\{\mathcal{\xi}_i\right\}_{i \in I}\right)$,
\begin{equation}\label{eq3.10}
C \sum_{i \in I}\left\langle g_i, g_i\right\rangle \leq\left\langle\sum_{i \in I} \Gamma_i^* g_i, \sum_{i \in I} \Gamma_i^* g_i\right\rangle \leq D \sum_{i \in I}\left\langle g_i, g_i\right\rangle .
\end{equation}
Moreover, since for all $i \in I,\left\{z_{i, j}\right\}_{j \in P_i}$ is a Riesz basis for $\mathcal{X}_i$,
$$
\left\{g_i \in \mathcal{X}_i:\left\langle g_i, z_{i, j}\right\rangle=0 \text {, for all } j \in P_i\right\}=\{0\},
$$
and for each $\left\{\alpha_{i, j}\right\}_{j \in P_i} \in l^2\left(P_i\right)$,

\begin{equation}\label{eq3.11}
A_i \sum_{j \in P_i} \alpha_{i, j} \alpha_{i, j}^* \leq\left\langle\sum_{j \in P_i} \alpha_{i, j} z_{i, j}, \sum_{j \in P_i} \alpha_{i, j} z_{i, j}\right\rangle \leq B_i \sum_{j \in P_i} \alpha_{i, j} \alpha_{i, j}^*, \text{ for all } i \in I. 
\end{equation}
We have
$$
\begin{aligned}
\left\{\xi \in \mathcal{X}:\left\langle\Gamma_i \xi, z_{i, j}\right\rangle\right. & \left.=0, \text { for all } i \in I \text { and } j \in P_i\right\} \\
& =\left\{\xi \in \mathcal{X}: \Gamma_i \xi=0, \text { for all } i \in I\right\} \\
& \subset Ker\left(K^*\right) .
\end{aligned}
$$
Thus $\left\{\xi \in \mathcal{X}:\left\langle \xi, \Gamma_i^* z_{i, j}\right\rangle=0\right.$, for all $i \in I$ and $\left.j \in P_i\right\} \subset Ker\left(K^*\right)$. If $\inf _i A_i=A$ and $\sup _i B_i=B$ then by \ref{eq3.10} and \ref{eq3.11}, for all $\left\{\gamma_{i, j}\right\}_{i \in I, j \in P_i} \in l^2(I)$, we have
$$
\begin{aligned}
C A  \sum_{i \in I} \sum_{j \in P_i} \gamma_{i, j} \gamma_{i, j}^* & \leq C \sum_{i \in I}\left\langle\sum_{j \in P_i} \gamma_{i, j} z_{i, j}, \sum_{j \in P_i} \gamma_{i, j} z_{i, j}\right\rangle \\
& \leq\left\langle\sum_{i \in I} \Gamma_i^*\left(\sum_{j \in P_i} \gamma_{i, j} z_{i, j}\right), \sum_{i \in I} \Gamma_i^*\left(\sum_{j \in P_i} \gamma_{i, j} z_{i, j}\right)\right\rangle \\
& \leq D \sum_{i \in I}\left\langle\sum_{j \in P_i} \gamma_{i, j} z_{i, j}, \sum_{j \in P_i} \gamma_{i, j} z_{i, j}\right\rangle \\
& \leq D B  \sum_{i \in I} \sum_{j \in P_i} \gamma_{i, j} \gamma_{i, j}^* .
\end{aligned}
$$
Thus $\left\{\Gamma_i^* z_{i, j}\right\}_{i \in I, j \in P_i}$ is a $K$-Riesz basis for $\mathcal{X}$.
\end{proof}

\begin{theorem}
	Let $\{\Gamma_i\}_{i \in I}$ be a sequence which acts as a $K$-$g$-frame for $\mathcal{X}$, associated with $\{\mathcal{X}_i\}_{i \in I}$. Suppose we have a limited subset $\sigma \subset I$. If the sequence $\{\Gamma_i\}_{i \in I \setminus \sigma}$ constitutes a $K$-$g$-Riesz basis for $\mathcal{X}$ in relation to $\{\mathcal{X}_i\}_{i \in I \setminus \sigma}$ and the series $\sum_{i \in I} \Gamma_i^* g_i$ shows convergence, it follows that  $\{g_i\}_{i \in I}$ belongs to $l^2\{\mathcal{X}_i\}_{i \in I}$.
\end{theorem}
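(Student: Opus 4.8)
The plan is to transfer the whole question to the subfamily indexed by $I\setminus\sigma$, invoke the $K$-$g$-Riesz basis inequalities only on finite truncations (where they apply unconditionally), and then push the estimate to the limit by a Cauchy argument, so that completeness of $\mathcal{X}$ and of $\mathcal{A}$ does the remaining work. First I would note that, since $\sigma$ is finite, $\sum_{i\in\sigma}\Gamma_i^* g_i$ is an honest finite sum, hence an element of $\mathcal{X}$; subtracting it from the convergent series $\sum_{i\in I}\Gamma_i^* g_i$ shows that $\sum_{i\in I\setminus\sigma}\Gamma_i^* g_i$ also converges. Likewise $\{g_i\}_{i\in I}\in l^2(\{\mathcal{X}_i\}_{i\in I})$ if and only if $\{g_i\}_{i\in I\setminus\sigma}\in l^2(\{\mathcal{X}_i\}_{i\in I\setminus\sigma})$, because adjoining or deleting finitely many coordinates does not affect the defining convergence. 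So it is enough to prove membership for the subfamily over $I\setminus\sigma$.

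For the core step, put $t_F=\sum_{i\in F}\Gamma_i^* g_i$ for finite $F\subset I\setminus\sigma$; by the previous paragraph the partial sums $\{t_F\}$ converge in $\mathcal{X}$, hence form a Cauchy net. For increasing finite sets $F\subseteq F'$ one has $t_{F'}-t_F=\sum_{i\in F'\setminus F}\Gamma_i^* g_i$, and the finitely supported family $\{g_i\}_{i\in F'\setminus F}$ genuinely lies in $l^2(\{\mathcal{X}_i\}_{i\in I\setminus\sigma})$, so the lower $K$-$g$-Riesz bound (part (b) of the definition, with constant $C>0$) may legitimately be applied to it and gives
\[
C\sum_{i\in F'\setminus F}\langle g_i,g_i\rangle \ \le\ \big\langle\, t_{F'}-t_F,\ t_{F'}-t_F\,\big\rangle .
\]
Applying any continuous $C^*$-seminorm $p_\alpha$ on $\mathcal{A}$, which is monotone on positive elements, yields $C\,p_\alpha\!\big(\sum_{i\in F'\setminus F}\langle g_i,g_i\rangle\big)\le \bar{p}_{\mathcal{X}}(t_{F'}-t_F)^2$, and the right-hand side tends to $0$ as $F,F'$ grow because $\{t_F\}$ is Cauchy. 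Hence $\{\sum_{i\in F}\langle g_i,g_i\rangle\}_F$ is a Cauchy net in $\mathcal{A}$, so by completeness it converges; that is, $\sum_{i\in I\setminus\sigma}\langle g_i,g_i\rangle$ converges in $\mathcal{A}$, which is exactly the statement that $\{g_i\}_{i\in I\setminus\sigma}\in l^2(\{\mathcal{X}_i\}_{i\in I\setminus\sigma})$, and therefore $\{g_i\}_{i\in I}\in l^2(\{\mathcal{X}_i\}_{i\in I})$.

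The one point that needs care — the main obstacle — is that the defining inequalities of a $K$-$g$-Riesz basis are postulated only for sequences that already belong to $l^2$, which is precisely what we are trying to establish; they cannot be applied to $\{g_i\}_{i\in I\setminus\sigma}$ directly, and the resolution is exactly the truncation-plus-limit device above, leaning on completeness of $\mathcal{X}$ and $\mathcal{A}$. An equivalent route reaches the same conclusion through Theorem \ref{thrm3.11}: writing $\Gamma_i=\Xi_i T$ with $\{\Xi_i\}_{i\in I\setminus\sigma}$ a $g$-orthonormal basis and $T\in Hom_{\mathcal{A}}^*(\mathcal{X})$ bounded and surjective, one has $\Gamma_i^*=T^*\Xi_i^*$, and since $T^*$ is bounded below by Proposition \ref{prop2.2}, the convergence of $\sum_{i\in I\setminus\sigma}\Gamma_i^* g_i$ forces the partial sums of $\sum_{i\in I\setminus\sigma}\Xi_i^* g_i$ to be Cauchy, hence convergent to some $s\in\mathcal{X}$; $g$-orthonormality then gives $\sum_{i\in I\setminus\sigma}\langle g_i,g_i\rangle=\langle s,s\rangle$, again placing $\{g_i\}_{i\in I}$ in $l^2(\{\mathcal{X}_i\}_{i\in I})$.
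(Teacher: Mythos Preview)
Your proof is correct, and in fact you give two arguments. The second one, via Theorem~\ref{thrm3.11}, is exactly the paper's proof: the paper factors $\Gamma_i=\Xi_iT$ with $\{\Xi_i\}_{i\in I\setminus\sigma}$ $g$-orthonormal and $T$ bounded surjective, writes $\sum_{i\in I\setminus\sigma}\Gamma_i^*g_i=T^*\big(\sum_{i\in I\setminus\sigma}\Xi_i^*g_i\big)$, and then reads off $\sum_{i\in I\setminus\sigma}\langle g_i,g_i\rangle=\big\langle\sum\Xi_i^*g_i,\sum\Xi_i^*g_i\big\rangle<\infty$ from $g$-orthonormality. You actually improve on the paper here: the paper writes the identity $\sum\Gamma_i^*g_i=T^*(\sum\Xi_i^*g_i)$ without justifying that $\sum\Xi_i^*g_i$ converges, whereas you correctly extract this from the bounded-belowness of $T^*$ (Proposition~\ref{prop2.2}) and closedness of its range.

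Your first argument is genuinely different and more self-contained: rather than invoking the structural Theorem~\ref{thrm3.11}, you work directly with the defining lower inequality of a $K$-$g$-Riesz basis applied to finitely supported truncations, and pass to the limit via a Cauchy argument in $\mathcal{A}$. This is more elementary---it needs nothing beyond the definition and completeness---and it sidesteps the circularity you rightly flag (the inequality is only postulated for $l^2$ sequences). The paper's route, by contrast, repackages the same lower bound inside the factorization $\Gamma_i=\Xi_iT$ with $T^*$ bounded below, which is perhaps conceptually cleaner once Theorem~\ref{thrm3.11} is available but logically heavier. Either approach is fine; yours has the advantage of making the core mechanism explicit.
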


\begin{proof}
	Start by observing that the series $\sum_{i \in I} \Gamma_i^* g_i$ converges whenever $g_i$ is an element of $\mathcal{X}_i$.
	
	With this premise, the convergence of the partial sum $\sum_{i \in I \setminus \sigma} \Gamma_i^* g_i$ naturally follows. Recognizing that $\{\Gamma_i\}_{i \in I \setminus \sigma}$ serves as a $K$-g-Riesz basis for $\mathcal{X}$ in association with $\{\mathcal{X}_i\}_{i \in I \setminus \sigma}$, and by invoking Theorem \ref{thrm3.11}, we ascertain the existence of a definite bounded operator $T: \mathcal{X} \to \mathcal{X}$ alongside a g-orthonormal ensemble $\{\Xi_i\}_{i \in I \setminus \sigma}$ for $\mathcal{X}$. This signifies that for each index $i$ outside the set $\sigma$, the relation $\Gamma_i=\Xi_i T$ holds true, with the scope of $K$ lying within $Ran(T^*)$.
	
	This allows the partial sum to be articulated as:
	$$
	\sum_{i \in I \setminus \sigma} \Gamma_i^* g_i = T^*\left( \sum_{i \in I \setminus \sigma} \Xi_i^* g_i \right).
	$$
	Given that the ensemble $\{\Xi_i\}_{i \in I \setminus \sigma}$ is g-orthonormally structured, the equation becomes:
	$$
	\sum_{i \in I \setminus \sigma}\langle g_i, g_i\rangle = \langle \sum_{i \in I \setminus \sigma} \Xi_i^* g_i, \sum_{i \in I \setminus \sigma} \Xi_i^* g_i \rangle < \infty.
	$$
	Concluding from the above, the collection $\{g_i\}_{i \in I \setminus \sigma}$ can be placed in $l^2\{\mathcal{X}_i\}_{i \in I \setminus \sigma}$, which implies that the entirety of $\{g_i\}_{i \in I}$ is nestled within $l^2\{\mathcal{X}_i\}_{i \in I}$.
\end{proof}

\end{document}